\theoremstyle{plain}
 \newtheorem{thm}{Theorem}
 \newtheorem{lem}[thm]{Lemma}
 \newtheorem{cor}[thm]{Corollary}
 \newtheorem{prop}[thm]{Proposition}
\theoremstyle{definition}
\newcommand{\thmref}[1]{Theorem~\ref{#1}}
\newcommand{\lemref}[1]{Lemma~\ref{#1}}
\newcommand{\cororef}[1]{Corollary~\ref{#1}}
\newcommand{\propref}[1]{Proposition~\ref{#1}}
\begin{document}

\title[Arithmetical independence of certain uniform sets of algebraic integers]{Arithmetical independence of certain uniform sets of\\ algebraic integers}

\author[Asaki Saito]{Asaki Saito$^{\ast}$}
\address{Asaki Saito, Department of Complex and Intelligent Systems, Future University Hakodate, 116-2 Kamedanakano-cho, Hakodate, Hokkaido 041-8655, Japan}
\email{saito@fun.ac.jp}
\thanks{$^\ast$Corresponding author.}

\author{Jun-ichi Tamura}
\address{Jun-ichi Tamura, Institute for Mathematics and Computer Science, Tsuda College, 2-1-1 Tsuda-machi, Kodaira, Tokyo 187-8577, Japan}
\email{jtamura@tsuda.ac.jp}

\author{Shin-ichi Yasutomi}
\address{Shin-ichi Yasutomi, Faculty of Science, Toho University, 2-2-1 Miyama, Funabashi, Chiba 274-8510, Japan}
\email{shinichi.yasutomi@sci.toho-u.ac.jp}

\begin{abstract}
We study four (families of) sets of algebraic integers of degree less
than or equal to three.
Apart from being simply defined, we show that they share two
distinctive characteristics: almost uniformity and arithmetical independence.
Here, ``almost uniformity'' means that the elements of a finite
set are distributed almost equidistantly in the unit interval,
while ``arithmetical independence'' means that the number
fields generated by the elements of a set do not have a mutual inclusion
relation each
other.
Furthermore, we reveal to what extent the algebraic number fields
generated by the elements of the four sets can cover quadratic or cubic
fields.
\end{abstract}

\date{\today}

\maketitle

\section{Introduction}

Saito and Yamaguchi \cite{SaitoChaos2016, SaitoChaos2018} have
proposed to use the binary sequence $\left\{ \epsilon_i
\right\}_{i=1,2,\dots}$ ($\epsilon_i \in \left\{0,\,1\right\}$)
obtained from the binary expansion $\alpha = \sum_{i=1}^{\infty}
\epsilon_i 2^{-i}$ of an irrational algebraic integer $\alpha$ in the
open unit interval $(0,1)$ as a pseudorandom binary sequence.
As a set of seeds $\alpha$ for pseudorandom number generation, they introduced two sets, $I_{n}^{2,r}$
and $I_{m,n}^{3,ntr}$:
For any integer $n$ satisfying either $n \ge 1$ or $n \le -3$, a set
$I_{n}^{2,r}$ of real algebraic integers of degree two over ${\mathbb
  Q}$ is defined by
\begin{align}\label{eq:RealQuadraticI}
I_{n}^{2,r} := \left\{
\begin{array}{ll}
\left\{\alpha \in (0,1) \mid \alpha^2+n\alpha+c=0, c \in {\mathbb Z}, -n \le c \le -1\right\} &   \textrm{~  if~} n \ge 1, \\
\left\{\alpha \in (0,1) \mid \alpha^2+n\alpha+c=0, c \in {\mathbb Z}, 1 \le c \le -n-2\right\} &   \textrm{~  if~} n \le -3.
\end{array}\right.
\end{align}
Likewise, for any integers $m, n$ satisfying $m^2-3n \le 0$ and $m+n
\ge 1$, a set $I_{m,n}^{3,ntr}$ of real algebraic integers of degree
three over ${\mathbb Q}$ is defined by
\begin{align}\label{eq:NotTotallyRealCubicDefinition}
I_{m,n}^{3,ntr} &:= \left\{\alpha \in (0,1) \mid \alpha^3+m\alpha^2+n\alpha+d=0, d \in {\mathbb Z}, -(m+n) \le d \le -1\right\}.
\end{align}
Note that any $\alpha \in I_{m,n}^{3,ntr}$ is not totally real, that is,
$\alpha$ has two complex conjugates (see \cite{SaitoChaos2018}).
Both $I_{n}^{2,r}$ and $I_{m,n}^{3,ntr}$ have some desirable properties as a
set of seeds, such as almost uniform distribution of their elements
in the unit interval.
For definition of almost uniformity, see end of this section.
In particular, they have been observed to have a remarkable property ---arithmetical independence--- from computer experiments \cite{SaitoChaos2016, SaitoChaos2018}.
Here, we say that a set $S \subset {\mathbb C}$ is {\it arithmetically
  independent} if $\alpha \notin {\mathbb Q}(\beta)$ and $\beta \notin
{\mathbb Q}(\alpha)$ hold for any $\alpha \neq \beta \in S$.
If $\alpha$, $\beta$ are algebraic numbers of prime degrees, then
${\mathbb Q}(\alpha) \neq {\mathbb Q}(\beta)$ is equivalent to $\alpha
\notin {\mathbb Q}(\beta)$ and $\beta \notin {\mathbb Q}(\alpha)$.

When generating multiple pseudorandom sequences, it is desirable that
they are as different from each other as possible.
For example, it is undesirable that a certain binary sequence $\left\{
\epsilon_i \right\}_{i=1,2,\dots}$ and its complement $\left\{
1-\epsilon_i \right\}_{i=1,2,\dots}$ are generated simultaneously.
One can guarantee that such a case does not occur if
a seed set $I$ is arithmetically independent.
(Note that $\alpha \in I$ implies $1-\alpha \notin I$.)
However, the arithmetical independence of
$I_{n}^{2,r}$ or $I_{m,n}^{3,ntr}$ was only presented as conjectures
and no proofs have been given yet.
Moreover, algebraic and number theoretical properties, other than the
arithmetical independence, of $I_{n}^{2,r}$ and $I_{m,n}^{3,ntr}$
still remain to be elucidated.

In this paper in addition to the above-mentioned sets $I_{n}^{2,r}$
and $I_{m,n}^{3,ntr}$, we introduce two sets, $I_{n}^{2,i}$ and
$I_{m,n}^{3,tr}$:
A set $I_{n}^{2,i}$ of imaginary algebraic integers of degree two is
defined for any integer $n \ge 1$ as follows:
\begin{align}\label{eq:ImaginaryQuadraticDefinition}
I_{n}^{2,i} := \left\{
\begin{array}{ll}
\left\{\displaystyle\frac{1+\sqrt{1-4c}}{2} \in {\mathbb C} \setminus {\mathbb R} ~\middle|~  c \in {\mathbb Z}, \left(\displaystyle\frac{n-1}{2}\right)^2 +1\le c \le \left(\displaystyle\frac{n+1}{2}\right)^2\right\} &   \textrm{~  if~} n \textrm{~is odd}, \\
\left\{\displaystyle\sqrt{-c} \in {\mathbb C} \setminus {\mathbb R} ~\middle|~  c \in {\mathbb Z}, \left(\displaystyle\frac{n}{2}\right)^2 < c < \left(\displaystyle\frac{n}{2} +1\right)^2\right\} &   \textrm{~  if~} n \textrm{~is even}.
\end{array}\right.
\end{align}
Note that $\left(1+\sqrt{1-4c}\right)/2$ is a root of the polynomial $X^2
- X + c$ and $\sqrt{-c}$ is that of $X^2 + c$.
Likewise, a set $I_{m,n}^{3,tr}$ of real algebraic integers of degree
at most three is defined for any integers $m, n$ satisfying $n \le
-m-3$ as follows:
\begin{align}\label{eq:TotallyRealCubicDefinition}
I_{m,n}^{3,tr} &:= \left\{\alpha \in (0,1) \mid \alpha^3+m\alpha^2+n\alpha+d=0, d \in {\mathbb Z}, 1 \le d \le -m-n-2\right\}.
\end{align}
For these four (families of) sets of algebraic integers, we show that
they share two distinctive characteristics: the almost uniformity and
the arithmetical independence.
Moreover, we show that certain unions of $I_{n}^{2,r}$ (resp. $I_{n}^{2,i}$,
$I_{m,n}^{3,ntr}$, and $I_{m,n}^{3,tr}$) are {\it full generative} with respect to
real quadratic fields (resp. imaginary
quadratic fields except
${\mathbb Q}(\sqrt{-1})$, real cubic fields which are not totally real, and totally real cubic fields)
in the sense
that it contains a primitive element of any real quadratic field (resp. imaginary
quadratic field except
${\mathbb Q}(\sqrt{-1})$, real cubic field which is not totally real, and totally real cubic field).

To the best of our knowledge,
$I_{n}^{2,r}$, $I_{n}^{2,i}$, $I_{m,n}^{3,ntr}$, and $I_{m,n}^{3,tr}$ are the
simplest sets guaranteed to be arithmetically independent,
consisting of algebraic integers of degree two or three.
In number theory, it is often the case that the difference in
discriminants is used to indicate the difference in fields.
However, this is hardly simple as a way of defining sets;
in particular it requires further number-theoretic discussion
when dealing with fields of degree three or higher with the same
discriminant (for details on handling the cubic case, see, e.g.,
\cite{DeloneFaddeev,HambletonSAWilliamHC}).
Apart from sets defined using $n$th roots of appropriately chosen
integers, which typically have elements with larger heights than
$I_{n}^{2,r}$, $I_{n}^{2,i}$, $I_{m,n}^{3,ntr}$, and $I_{m,n}^{3,tr}$
(for background on heights, see, e.g., \cite{Silverman}), there seem to be almost
no other sets of algebraic numbers, or families of algebraic number
fields, known to be arithmetically independent.
The exception is the one-parameter family of totally real cubic fields
$K_{a}$ generated by $x^3 - a x^2 -(a+3) x -1$ with $a \in {\mathbb
  Z}$.
The field $K_{a}$ is called a simplest cubic field \cite{Shanks}, and
necessary and sufficient conditions are given for $K_{a}$ and $K_{b}$
with $a \neq b$ to be isomorphic \cite{Morton}.
(The problem of determining whether two fields belonging to a specific
family are isomorphic is sometimes called field isomorphism problem;
see, e.g., \cite{Hoshi}.)
Okazaki showed that all $K_{a}$ in this family are
distinct from each other, except for the obvious coincidence of $K_{a}
= K_{-a-3}$, and a few exceptions: $K_{-1} = K_{5} = K_{12} =
K_{1259}$, $K_{0} = K_{3} = K_{54}$, $K_{1} = K_{66}$, and $K_{2} =
K_{2389}$ for $a \ge -1$.
Hoshi gave another proof of this result \cite[Theorem 1.4]{Hoshi}, and
also showed a similar result for a one-parameter family of totally
real sextic fields \cite{Hoshi2}, as well as for a one-parameter family of
totally real fields of degree 24 and 12 \cite{Hoshi3}.

Before proceeding
we explain what finite sets in the closed unit interval
$[0,1]$ are called uniform or almost uniform in this paper.
Let $N$ be a positive integer.
Let $S = S_N = \left\{x_i=x_i^{(N)} ~\middle|~ i=1,2,\dots, N\right\}$ be a finite set with
$N$ elements in $[0,1]$.
Without loss of generality, we may assume that the elements are ordered increasingly, i.e., $0 \le
x_1 \le x_2 \le \dots \le x_N \le 1$.
Let $\Delta_{i} := x_{i+1} - x_{i}$ ($i \in \left\{1,2,\dots,
N-1\right\}$) be a distance between two neighboring elements.

We say that the elements of a finite set $S \subset [0,1]$ are
distributed {\it uniformly} in the unit interval if
$\Delta_{i} = 1/N$ holds for all $i \in \left\{1,2,\dots, N-1\right\}$.
This definition of uniformity is based on the minimality of the
discrepancy $D_N$, given by
\[
D_N = \sup_{0 \le a < b \le 1} \left | \frac{A\left( [a,b); S\right)}{N}
  - (b-a) \right |,
\]
where $A\left( [a,b); S\right)$ is the number of elements in $S$ that
fall into $[a,b)$ \cite{Niederreiter}.
Using \cite[theorem 2.7]{Niederreiter}, we see that
a finite sequence $\left\{y_1,y_2,\dots,y_N\right\}$
in $[0,1]$ gives the minimum value $1/N$ of 
$D_N$ if and only if there exists $\delta \in [0,1/N]$ such that
$y_1,y_2,\dots,y_N$ are the numbers
$1/N-\delta,2/N-\delta,\dots,N/N-\delta$ in some order.

Next we explain the concept of almost uniformity.
Let $S_1, S_2, \dots$ be a family of finite sets in $[0,1]$.
Assume that there exists a constant $c$ independent of $N$ such
that
\[
\max_{i \in \left\{1,2,\dots, N-1\right\}} \left| \Delta_{i} -\frac{1}{N} \right|
\le \frac{c}{N^2}
\]
holds for all $N$.
We say that the elements of $S_N$ are distributed {\it almost
  uniformly} in the unit interval, or simply that $S_N$ is almost
uniform, if $N \ge 2 c$.
Note that one can impose a condition on $N$ stricter than $N \geq 2c$
in order to avoid less uniform $S_N$, but such differences in
conditions do not affect the subsequent discussion.
For simplicity, without explicitly specifying the constant $c$ (and
thus the condition $N \geq 2c$), we say that the elements of $S_N$ are
distributed almost uniformly in the unit interval for sufficiently
large $N$.
Obviously, $S_N$ is almost uniform for sufficiently large $N$
if
\begin{equation}\label{eq:MaxOrderInequality}
\max_{i \in \left\{1,2,\dots, N-1\right\}} \left| \Delta_{i} -\frac{1}{N} \right|
= O\left(\frac{1}{N^2}\right)
\end{equation}
holds, where $O$ is Landau's Big O notation.
We can see from \cite[theorem 2.7]{Niederreiter} that the discrepancy
$D_N$ of $S_N$ satisfying \eqref{eq:MaxOrderInequality} satisfies $D_N
= O\left(1/N\right)$.

\section{Properties of $I_{n}^{2,r}$}
\label{sec:PropertiesofI_{n}^{2,r}}

In this section, we investigate properties of the sets $I_{n}^{2,r}$
of real algebraic integers of degree two, which are defined in
\eqref{eq:RealQuadraticI}.
Concerning the uniformity of $I_{n}^{2,r}$ in the unit interval,
we have the following result.

\begin{prop}[\cite{SaitoChaos2016}]\label{Prop:QuadraticUniformity}
The elements of $I_{b}^{2,r}$
are distributed almost uniformly in the unit interval for sufficiently
large $|b|$.
\end{prop}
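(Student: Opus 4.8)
The plan is to solve the two quadratics in the definition \eqref{eq:RealQuadraticI} explicitly, estimate the gaps between consecutive roots by rationalizing, and then invoke \eqref{eq:MaxOrderInequality}. The two branches $n\ge 1$ and $n\le-3$ are handled separately but by the same device.

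First suppose $b=n\ge 1$ and put $c=-k$ with $k\in\{1,\dots,n\}$. For $f_k(X):=X^2+nX-k$ we have $f_k(0)=-k<0<1+n-k=f_k(1)$, so $f_k$ has exactly one root in $(0,1)$, namely $\alpha_k:=\bigl(-n+\sqrt{n^2+4k}\bigr)/2$; since $\sqrt{n^2+4k}$ is strictly increasing in $k$, the numbers $\alpha_1<\cdots<\alpha_n$ are distinct and $N=n$. Rationalizing,
\[
\Delta_k=\alpha_{k+1}-\alpha_k=\frac{\sqrt{n^2+4k+4}-\sqrt{n^2+4k}}{2}=\frac{2}{\sqrt{n^2+4k+4}+\sqrt{n^2+4k}} .
\]
For $1\le k\le n-1$ both radicands lie in $[\,n^2,(n+2)^2)$, hence both square roots lie in $[\,n,n+2)$, which yields $\tfrac1{n+2}<\Delta_k\le\tfrac1n$ and therefore $\bigl|\Delta_k-\tfrac1N\bigr|=\bigl|\Delta_k-\tfrac1n\bigr|\le\tfrac{2}{n(n+2)}=O(1/N^2)$, uniformly in $k$.

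Next suppose $b=n\le-3$ and put $m:=-n\ge 3$. For $c\in\{1,\dots,m-2\}$ and $g_c(X):=X^2-mX+c$ we have $g_c(0)=c>0>1-m+c=g_c(1)$ and the discriminant $m^2-4c\ge(m-2)^2+4>0$, so $g_c$ has exactly one root in $(0,1)$, namely $\alpha_c:=\bigl(m-\sqrt{m^2-4c}\bigr)/2$, and $N=m-2$. The same rationalization gives $\Delta_c=2/\bigl(\sqrt{m^2-4c}+\sqrt{m^2-4c-4}\bigr)$, and for $1\le c\le m-3$ both radicands lie in $\bigl((m-2)^2,m^2\bigr)$, so $\tfrac1m<\Delta_c<\tfrac1{m-2}=\tfrac1N$ and hence $\bigl|\Delta_c-\tfrac1N\bigr|<\tfrac{2}{m(m-2)}=O(1/N^2)$. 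In both branches $\max_i\bigl|\Delta_i-\tfrac1N\bigr|=O(1/N^2)$, so \eqref{eq:MaxOrderInequality} holds and $I_b^{2,r}$ is almost uniform for sufficiently large $|b|$.

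The arithmetic is elementary, so no step is a serious obstacle; the one place to be careful is the bookkeeping of the cardinality, which is $N=n$ in the first branch but $N=-n-2$ in the second. Consequently the target spacing $1/N$ is not simply $1/|b|$, and in the second branch one must compare $1/(m-2)$ — not $1/m$ — against the bounds on $\Delta_c$, which is exactly why the rationalized form is convenient (it makes both $\tfrac1m$ and $\tfrac1{m-2}$ appear transparently).
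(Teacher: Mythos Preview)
The paper does not actually prove this proposition; it is quoted from \cite{SaitoChaos2016} without argument. Your proof is correct and proceeds exactly in the spirit of the uniformity arguments the paper \emph{does} carry out (Theorems~\ref{Thm:ImaginaryQuadraticUniformity} and~\ref{Thm:TotallyRealCubicUniformity}): write the roots explicitly, estimate consecutive gaps by an elementary two-sided bound, and conclude via \eqref{eq:MaxOrderInequality}. One cosmetic remark: in the first branch the radicands are strictly greater than $n^2$, so $\Delta_k\le 1/n$ is in fact strict, but this is immaterial to the $O(1/N^2)$ conclusion.
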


In terms of pseudorandom number generation, this characteristic of
$I_{n}^{2,r}$ is desirable for unbiased sampling of seeds.

As mentioned in the introduction, however, another characteristic of
$I_{n}^{2,r}$, namely the arithmetical independence, was only presented as a conjecture in \cite{SaitoChaos2016}.
Here, we give a proof of the conjecture.

\begin{thm}\label{Thm:QuadraticNoncoincidence}
Let $b$ be an integer other than $0$, $-1$, and $-2$. Then, ${\mathbb
Q}(\alpha) \neq {\mathbb Q}(\beta)$ holds for all $\alpha, \beta
\in I_{b}^{2,r}$ with $\alpha \neq \beta$.
\end{thm}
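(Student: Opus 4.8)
The plan is to reduce the equality ${\mathbb Q}(\alpha)={\mathbb Q}(\beta)$ to an equality of discriminants, and then show that the discriminants attached to distinct elements of $I_{b}^{2,r}$ are distinct. Write $b$ for the common middle coefficient, so that the elements of $I_{b}^{2,r}$ are the roots in $(0,1)$ of $X^2+bX+c$ as $c$ ranges over the prescribed interval of integers. For such $\alpha$ with minimal polynomial $X^2+bX+c$, the field ${\mathbb Q}(\alpha)$ equals ${\mathbb Q}(\sqrt{b^2-4c})$, and its discriminant as a quadratic field is the squarefree kernel of $b^2-4c$ (up to the usual factor of $4$). So ${\mathbb Q}(\alpha)={\mathbb Q}(\beta)$ for $\alpha,\beta$ coming from $c=c_1$ and $c=c_2$ is equivalent to $b^2-4c_1$ and $b^2-4c_2$ having the same squarefree part, i.e. $(b^2-4c_1)(b^2-4c_2)$ being a perfect square.

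First I would set $u=b^2-4c_1$ and $v=b^2-4c_2$ with $c_1\neq c_2$ in the allowed range, and assume for contradiction that $uv=w^2$ for some positive integer $w$ (both $u,v>0$ since the roots are real and in fact in $(0,1)$). The key is a gap/size estimate: because $c_1,c_2$ lie in an interval of length roughly $|b|$, the quantities $u$ and $v$ are both of size $\Theta(b^2)$ and differ by $4|c_1-c_2|\le 4(|b|-1)$, which is much smaller than $\sqrt{uv}\approx b^2$. Then I would argue that two integers that are this close together cannot have equal squarefree parts unless they are equal: if $u=d s^2$ and $v=d t^2$ with $d$ the common squarefree part and $s<t$, then $v-u=d(t^2-s^2)\ge d(2s+1)\ge 2\sqrt{du}-d=2\sqrt{u}\cdot\sqrt{d}/\sqrt{d}\cdots$; more cleanly, $v-u\ge 2\sqrt{u}$ roughly (since consecutive values $ds^2$ are spaced by at least $d(2s+1)>2\sqrt{ds^2}=2\sqrt u$ when $d\ge 1$). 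This forces $|v-u|\ge 2\sqrt{u}\sim 2|b|$, contradicting $|v-u|\le 4(|b|-1)$ once the constants are tracked — so I would need to check the inequality $2\sqrt{u} > 4(|b|-1)$, i.e. roughly $\sqrt{b^2-4c_1} > 2(|b|-1)$, using the explicit bounds on $c$ from \eqref{eq:RealQuadraticI} and handling the sign cases $b\ge 1$ and $b\le -3$ separately.

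The main obstacle I anticipate is that the naive size estimate is too crude: $\sqrt{u}$ is of order $|b|$ and $|v-u|$ is also of order $|b|$, so the inequality $2\sqrt{u}>|v-u|$ is not automatic and may fail for small $|b|$ or for $c_i$ near the ends of the interval, where $b^2-4c_i$ can be as small as $O(|b|)$ rather than $O(b^2)$. To handle this I would look more carefully: when $b\ge 1$ and $c$ is near $-1$, $b^2-4c\approx b^2+4$ is large, but when $c$ is near $-n=-b$, $b^2-4c\approx b^2+4b=(b+2)^2-4$, still of order $b^2$ — so in fact $u,v$ are \emph{always} comparable to $b^2$, not just on average, and the gap $|v-u|=O(|b|)$ really is of smaller order. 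I would make this rigorous by pinning down that $u,v\in[\,b^2+4,\,b^2+4|b|-8\,]$ (or the analogous interval in the $b\le -3$ case), so $\sqrt{u}\ge\sqrt{b^2+4}>|b|$, while $|v-u|\le 4(|b|-1)$, and then the elementary fact that distinct integers with the same squarefree part $d$ differ by at least $d\cdot((s+1)^2-s^2)=d(2s+1)\ge 2\sqrt{u}$ yields $2\sqrt u\le 4(|b|-1)<4\sqrt u$ — which is not yet a contradiction, so the truly delicate point is squeezing the constant: I would instead use that the two smallest values with squarefree part $d$ that can both lie in an interval around $b^2$ are $d s^2$ and $d(s+1)^2$ with $ds^2\approx b^2$, hence $s\approx |b|/\sqrt d$ and the gap is $\approx 2|b|\sqrt d\ge 2|b|$, and then rule out $d=1$ separately (where $u,v$ would have to be consecutive squares near $b^2$, impossible since $b^2$ itself would have to be essentially one of them while $u=b^2-4c\neq b^2$). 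Tightening these constants for all admissible $b$, with the finitely many small-$|b|$ cases checked by hand, is where the real work lies.
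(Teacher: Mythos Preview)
Your size estimate does not close, and the failure is \emph{not} confined to small $|b|$. Take $b\ge 1$: the discriminants $u=b^2-4c$ lie in $\{b^2+4,\,b^2+8,\,\dots,\,b^2+4b\}$, so $|v-u|\le 4(b-1)$, while your lower bound $v-u\ge d(2s+1)\ge 2\sqrt d\,\sqrt u\ge 2\sqrt 2\,b$ (using $d\ge 2$) satisfies $2\sqrt 2\,b<4(b-1)$ for every $b\ge 4$. So the inequality you are trying to violate is simply true for all large $b$, and no amount of constant-tightening will help. The discriminant route can be salvaged, but it needs a different idea: since $\sqrt u,\sqrt v\in(b,b+2)$ one has $(t-s)\sqrt d<2$, forcing $d\in\{2,3\}$ and $t=s+1$; then $v-u=d(2s+1)$ is $\equiv 2\pmod 4$ for $d=2$ and odd for $d=3$, contradicting $v-u=4(c_1-c_2)\equiv 0\pmod 4$. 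The missing ingredient is this congruence, not a sharper gap bound.

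The paper's proof avoids all of this. Because every $\alpha\in I_b^{2,r}$ has trace $-b$, the nontrivial conjugate is $\alpha'=-b-\alpha$; hence for $\alpha,\beta\in I_b^{2,r}$ one has $\alpha'-\beta'=-(\alpha-\beta)$ and so $|\alpha'-\beta'|=|\alpha-\beta|<1$. If ${\mathbb Q}(\alpha)={\mathbb Q}(\beta)$ then $(\alpha-\beta)(\alpha'-\beta')$ is the norm of the algebraic integer $\alpha-\beta$ in that field, hence a rational integer of absolute value less than $1$, hence zero --- contradiction. This two-line norm argument is the one to internalise: it is what generalises to the cubic sets $I_{m,n}^{3,ntr}$ and $I_{m,n}^{3,tr}$ later in the paper, where a discriminant comparison would be far less tractable.
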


\begin{proof}
Assume that there are $\alpha, \beta \in I_{b}^{2,r}$ such that
$\alpha \neq \beta$ and
${\mathbb Q}(\alpha) = {\mathbb Q}(\beta)$.
Set $K:={\mathbb Q}(\alpha) = {\mathbb Q}(\beta)$.
Let $\alpha_1$ and $\alpha_2$ be the conjugates of $\alpha$.
We may assume that $\alpha_1 = \alpha$.
Then, $\alpha_2 = -b -\alpha_1$.
Similarly, let $\beta_1$ and $\beta_2$ be the conjugates of $\beta$,
and assume that $\beta_1 = \beta$ and $\beta_2 = -b -\beta_1$.
Then, we have
\[
\left|\alpha_2 - \beta_2\right| = \left|\alpha_1 - \beta_1\right| < 1,
\]
where the inequality follows from $\alpha_1, \beta_1 \in (0,1)$.
The above gives
\begin{equation}\label{eq:RealQuadraticInequality}
\left|\left(\alpha_1 - \beta_1\right)\left(\alpha_2 - \beta_2\right)\right| < 1.
\end{equation}
Since $\left(\alpha_1 - \beta_1\right)\left(\alpha_2 - \beta_2\right)$
is the norm of $\alpha_1 - \beta_1$ as an element of $K$, it is a
rational number, but it is obviously an algebraic integer.
Thus, $\left(\alpha_1 - \beta_1\right)\left(\alpha_2 -
\beta_2\right)$ is an integer.
From \eqref{eq:RealQuadraticInequality}, we obtain
\[
\left(\alpha_1 - \beta_1\right)\left(\alpha_2 - \beta_2\right)=0,
\]
which implies $\alpha=\beta$.
This contradicts the assumption $\alpha \neq \beta$.
\end{proof}

We now see the diversity of the quadratic fields
generated by the elements in
$I_{n}^{2,r}$ from another viewpoint.
Let $S^{2,r}$ be the union of $I_{n}^{2,r}$ with positive $n$,
namely
\begin{equation}\label{eq:S2r}
S^{2,r}:=\bigcup_{n \ge 1} I_{n}^{2,r}.
\end{equation}
It turns out that $S^{2,r}$ includes a primitive element of any real
quadratic field, that is, $S^{2,r}$ is full generative with respect to
real quadratic fields.
We can see this from two stronger properties of $S^{2,r}$ and
$I_{n}^{2,r}$ (see Theorems \ref{Thm:RealQuadraticCover} and
\ref{Thm:RealQuadraticVarietyI} below).

In preparation for the next statement, we introduce some notation.
Let $S$ be a set of complex numbers.
We put
\begin{align*}
  +S := S,\qquad
  -S := \left\{ -\alpha ~\vert~ \alpha \in S \right\}.
\end{align*}
For an integer $n$, we further put
\begin{align*}
  S + n := \left\{ \alpha + n ~\vert~ \alpha \in S \right\},\qquad
  S - n := \left\{ \alpha - n ~\vert~ \alpha \in S \right\}.
\end{align*}
We put $\left\langle x \right\rangle := x - \lfloor x \rfloor$, where
$\lfloor x \rfloor$ is the greatest integer which does not
exceed a real number $x$.
In what follows, we call an algebraic integer of degree two over
${\mathbb Q}$ a {quadratic algebraic integer}.
We also need the following equality:
\begin{align}\label{eq:RealQuadraticSymmetry}
I_{-n-2}^{2,r} &= \left\{1-\alpha \mid \alpha \in I_{n}^{2,r}\right\}.
\end{align}
This equality follows from the fact that $\alpha$ is a root of
$X^2+bX+c$ if and only if $1 - \alpha$ is a root of $X^2+(-b-2)X
+b+c+1$.

\begin{thm}\label{Thm:RealQuadraticCover}
The followings hold:
\begin{enumerate}[(i)]
\item The set of all quadratic algebraic integers in ${\mathbb R}$
  is expressed as
\begin{equation}\label{eq:RealQuadraticDisjointUnion}
\biguplus_{\substack{n \in {\mathbb Z}\\\varepsilon \in \left\{-1,+1\right\}}} \left(\varepsilon S^{2,r} + n \right),
\end{equation}
where $\biguplus$ denotes disjoint union.

\item In particular, $S^{2,r}$ includes a primitive element of any real quadratic field.
\end{enumerate}
\end{thm}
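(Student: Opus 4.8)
The plan is to describe $S^{2,r}$ intrinsically as a piece of the set $\mathcal A$ of all quadratic algebraic integers lying in $(0,1)$, to deduce from this the exact splitting $\mathcal A = S^{2,r}\uplus(-S^{2,r}+1)$, and then to obtain part~(i) by subtracting the integer part of an arbitrary real quadratic algebraic integer. Part~(ii) will then be immediate from part~(i).

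\emph{Classification of the elements of $\mathcal A$.} Let $\alpha\in(0,1)$ be a quadratic algebraic integer with minimal polynomial $f(X)=X^2+bX+c$ ($b,c\in{\mathbb Z}$) and conjugate $\alpha'=-b-\alpha$. Since $\alpha\alpha'=c\in{\mathbb Z}$ and $\alpha\in(0,1)$, the conjugate $\alpha'$ cannot lie in $[0,1]$, so either $\alpha'<0$ or $\alpha'>1$. Because $f(0)=\alpha\alpha'$ and $f(1)=(1-\alpha)(1-\alpha')$ with $\alpha>0$ and $1-\alpha>0$, the numbers $f(0)$ and $f(1)$ have the same signs as $\alpha'$ and $1-\alpha'$, respectively. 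If $\alpha'<0$, then $f(0)<0<f(1)$, i.e.\ $c\le-1$ and $1+b+c\ge1$; this forces $b\ge1$ and $-b\le c\le-1$, which by \eqref{eq:RealQuadraticI} says exactly that $\alpha\in I_b^{2,r}$ with $b\ge1$, so $\alpha\in S^{2,r}$. If $\alpha'>1$, then $f(1)<0<f(0)$, i.e.\ $c\ge1$ and $1+b+c\le-1$; this forces $b\le-3$ and $1\le c\le-b-2$, i.e.\ $\alpha\in I_b^{2,r}$ with $b\le-3$, and by \eqref{eq:RealQuadraticSymmetry} one has $\bigcup_{b\le-3}I_b^{2,r}=\bigcup_{n\ge1}I_{-n-2}^{2,r}=\{1-\beta\mid\beta\in S^{2,r}\}=-S^{2,r}+1$. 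Hence no element of $\mathcal A$ has $b\in\{0,-1,-2\}$, and $\mathcal A=S^{2,r}\uplus(-S^{2,r}+1)$, the two pieces being disjoint because their elements' minimal polynomials have $b\ge1$ and $b\le-3$ respectively. (That every element of $S^{2,r}$ really has degree exactly two is a separate routine check: the discriminant $b^2-4c$ in \eqref{eq:RealQuadraticI} lies strictly between $b^2$ and $(b+2)^2$ and has parity opposite to $(b+1)^2$, so it is never a perfect square.)

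\emph{Proof of (i).} Let $\gamma$ be any real quadratic algebraic integer. It is irrational, so $\langle\gamma\rangle\in(0,1)$ is again a quadratic algebraic integer with ${\mathbb Q}(\langle\gamma\rangle)={\mathbb Q}(\gamma)$. By the classification, either $\langle\gamma\rangle\in S^{2,r}$, whence $\gamma=\langle\gamma\rangle+\lfloor\gamma\rfloor\in S^{2,r}+\lfloor\gamma\rfloor$, or $\langle\gamma\rangle=1-\beta$ with $\beta\in S^{2,r}$, whence $\gamma=-\beta+(\lfloor\gamma\rfloor+1)\in -S^{2,r}+(\lfloor\gamma\rfloor+1)$. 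Thus every real quadratic algebraic integer lies in \eqref{eq:RealQuadraticDisjointUnion}, and conversely every element of $\varepsilon S^{2,r}+n$ is clearly a real quadratic algebraic integer. For disjointness, $S^{2,r}\subset(0,1)$ gives $+S^{2,r}+n\subset(n,n+1)$ and $-S^{2,r}+m\subset(m-1,m)$, so two distinct translates can meet only when both lie in a single interval $(k,k+1)$, i.e.\ only for the pair $+S^{2,r}+k$ and $-S^{2,r}+(k+1)$; but their intersection is contained in $\bigl(S^{2,r}\cap(-S^{2,r}+1)\bigr)+k=\varnothing$. Hence the union in \eqref{eq:RealQuadraticDisjointUnion} is disjoint and equals the set of all real quadratic algebraic integers.

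\emph{Proof of (ii), and the main difficulty.} For a real quadratic field $K={\mathbb Q}(\sqrt D)$ ($D>1$ squarefree), the algebraic integer $\sqrt D$ is real and quadratic, so by part~(i) we may write $\sqrt D=\varepsilon\alpha+n$ with $\alpha\in S^{2,r}$, $\varepsilon\in\{-1,+1\}$, $n\in{\mathbb Z}$; then $\alpha=\varepsilon(\sqrt D-n)$ (using $\varepsilon^2=1$), so ${\mathbb Q}(\alpha)=K$ and $\alpha$ is a primitive element of $K$ lying in $S^{2,r}$. I expect the only genuine work to be the classification step, and within it the crux is verifying that the inequalities on $c$ in \eqref{eq:RealQuadraticI} are equivalent to ``the relevant root of $X^2+bX+c$ lies in $(0,1)$'', which in particular forces $b\notin\{0,-1,-2\}$; once that intrinsic description of $S^{2,r}$ is in place, the rest is bookkeeping with integer parts and the symmetry \eqref{eq:RealQuadraticSymmetry}.
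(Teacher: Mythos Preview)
Your proof is correct and follows essentially the same route as the paper: both reduce to the decomposition $\mathcal A = S^{2,r}\uplus(-S^{2,r}+1)$ of the quadratic algebraic integers in $(0,1)$, invoke the symmetry \eqref{eq:RealQuadraticSymmetry}, and then pass to all of ${\mathbb R}$ by subtracting integer parts. The only difference is that the paper imports the classification of $\mathcal A$ from~\cite{SaitoChaos2016}, whereas you prove it directly via the sign analysis of $f(0)$ and $f(1)$ and the discriminant check; your version is therefore more self-contained but not a different argument.
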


\begin{proof}
We prove (i).
We know that the set of all quadratic algebraic integers in the open
unit interval $(0,1)$ is given by
\begin{equation*}
\bigcup_{n \in {\mathbb Z} \setminus \left\{0, -1, -2\right\}} I_{n}^{2,r}
\end{equation*}
(see \cite{SaitoChaos2016}).
Using \eqref{eq:RealQuadraticSymmetry}, we express this set as
$S^{2,r} \cup \left(-S^{2,r} + 1 \right)$.
Let now $\alpha$ be an arbitrary quadratic algebraic integer in
${\mathbb R}$.
Then, $\left\langle \alpha
\right\rangle$ is contained in $S^{2,r} \cup
\left(-S^{2,r} + 1 \right)$.
This implies
\[
\alpha \in \bigcup_{n \in {\mathbb Z}} \left[\left\{ S^{2,r} \cup
  \left(-S^{2,r} + 1 \right) \right\} + n \right].
\]
It is easy to see that the family $\left\{ S^{2,r} + n ~\vert~ n \in
{\mathbb Z} \right\} \cup \left\{ -S^{2,r} + n ~\vert~ n \in {\mathbb
  Z} \right\}$ is mutually disjoint.

To prove (ii), it suffices, by (i), to note that any real quadratic field,
denoted by $K$, includes real quadratic algebraic integers, any of
which is a primitive element of $K$.
\end{proof}

Moreover, we can show that for any given finite combination of real quadratic
fields, there exists $I_{n}^{2,r}$ some of whose elements can generate
the given fields over ${\mathbb Q}$:

\begin{thm}\label{Thm:RealQuadraticVarietyI}
Let $k$ be a positive integer.
Let $K_1, K_2, \dots, K_k$ be distinct real quadratic fields.
Then, there exists a positive integer $n$ such that $I_{2 n}^{2,r}$
contains a primitive element of every $K_i$ ($i \in \{1,2,\dots,k\}$).
\end{thm}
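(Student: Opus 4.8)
The plan is to exhibit, for each real quadratic field $K_i = \mathbb{Q}(\sqrt{d_i})$ with $d_i > 1$ squarefree, an explicit element of $I_{2n}^{2,r}$ that generates $K_i$, where $n$ is chosen uniformly large enough to simultaneously accommodate all $k$ fields. First I would recall from \eqref{eq:RealQuadraticI} that for $n \ge 1$ the set $I_{n}^{2,r}$ consists of the roots in $(0,1)$ of $X^2 + nX + c$ with $-n \le c \le -1$, and that such a root exists precisely when the quadratic takes a negative value at $0$ and a positive value at $1$, i.e.\ when $c < 0$ and $1 + n + c > 0$; both hold for every $c$ in the stated range, so $I_{n}^{2,r}$ has exactly $n$ elements, with discriminants $n^2 - 4c$ ranging over $n^2 + 4, n^2 + 8, \dots, n^2 + 4n$. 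The squarefree kernel of $n^2 - 4c$ determines which field the corresponding root generates. So the task reduces to: given squarefree $d_1, \dots, d_k$, find $n$ (which we will take even, say $n = 2m$) and integers $c_i \in \{1, 2, \dots, n\}$ such that $(2m)^2 + 4c_i$ has squarefree part $d_i$, i.e.\ $4m^2 + 4c_i = d_i t_i^2$ for some positive integer $t_i$.

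The key step is to solve $m^2 + c_i = d_i (t_i/2)^2$ for each $i$ with a \emph{common} $m$. Write $d_i t_i^2 = 4(m^2 + c_i)$; since the left side must be $\equiv 0 \pmod 4$ and $c_i$ ranges over a full interval of length $n = 2m$, the constraint $1 \le c_i \le 2m$ is equivalent to $m^2 < d_i(t_i/2)^2 \le m^2 + 2m < (m+1)^2$, that is, $t_i/2 \in \bigl(m/\sqrt{d_i},\ \sqrt{m^2 + 2m}/\sqrt{d_i}\,\bigr)$. The half-open-to-open interval for $t_i$ has length roughly $1/\sqrt{d_i} + O(1/m)$ after scaling; more usefully, I would ask that $\lfloor \sqrt{d_i}\,s \rfloor$ behave well for a suitable $s$. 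Concretely, the cleanest route is: for each $i$ pick a positive integer $u_i$ and set $t_i = 2 u_i$, so we need $m^2 < d_i u_i^2 \le m^2 + 2m$, i.e.\ $m = \lfloor \sqrt{d_i}\, u_i \rfloor$ provided also $d_i u_i^2 \le m^2 + 2m$, which holds automatically whenever $d_i u_i^2 < (m+1)^2 = (\lfloor \sqrt{d_i} u_i\rfloor + 1)^2$ — true unless $\sqrt{d_i} u_i$ is extremely close to an integer from below in a way that skips the window. The real content is therefore a simultaneous Diophantine approximation statement: there exist arbitrarily large integers $m$ such that for every $i$ one has $\{\sqrt{d_i}\, u_i\}$ small (equivalently $m = \lfloor \sqrt{d_i} u_i \rfloor$ with $d_i u_i^2 \le m^2 + 2m$) for some integer $u_i$. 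Since the $\sqrt{d_i}$ are distinct irrationals, I expect to invoke either the three-distance / continued-fraction machinery for each $d_i$ separately and then a pigeonhole/CRT-style merging, or — more robustly — Weyl's theorem on uniform distribution of $(\sqrt{d_1}\, t, \dots, \sqrt{d_k}\, t) \bmod 1$ along a suitable linear progression to force all fractional parts into a common small box infinitely often, yielding infinitely many valid $m$.

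I would then package this as follows. First, reduce to the existence of one $n = 2m$ and integers $c_i$ with $(2m)^2 + 4c_i = d_i t_i^2$, $1 \le c_i \le 2m$. Second, reparametrize via $t_i = 2u_i$ and reduce to finding large $m$ with $m^2 < d_i u_i^2 \le m^2 + 2m$ for all $i$. Third, establish the approximation claim: using that $1, \sqrt{d_1}, \dots, \sqrt{d_k}$ span a set of pairwise-independent-enough irrationals (here one only needs each $\sqrt{d_i} \notin \mathbb{Q}$), show the orbit of $(\sqrt{d_i})_i$ scaled by integers is dense mod $1$ in a neighborhood of $0$, hence there are infinitely many $u \in \mathbb{Z}_{>0}$ and $m = \lfloor \sqrt{d_{i_0}}\, u\rfloor$-type choices making all windows nonempty; for each such $m$ define $c_i := (d_i u_i^2 - 4m^2)/4$, check it is a positive integer in $[1, 2m]$, and note the root $\alpha_i \in (0,1)$ of $X^2 + 2mX + c_i$ lies in $I_{2m}^{2,r}$ with $\mathbb{Q}(\alpha_i) = \mathbb{Q}(\sqrt{(2m)^2 - 4c_i}) = \mathbb{Q}(\sqrt{d_i}) = K_i$. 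The main obstacle is the third step: one must guarantee the windows $(m^2, m^2 + 2m]$ each capture a multiple $d_i u_i^2$ \emph{simultaneously}, which is exactly a simultaneous inhomogeneous Diophantine approximation with a shrinking-but-not-too-fast target (the window has relative width $\asymp 1/m$), so a naive Dirichlet bound is not quite enough and one needs either the ergodic/equidistribution input above or an explicit construction exploiting the arithmetic of $\sqrt{d_i}$ (e.g.\ taking $m$ inside a long run dictated by the continued fraction expansions). I would aim to present the equidistribution argument, as it is the shortest self-contained route.
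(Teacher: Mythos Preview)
Your reduction is correct and in fact coincides with the paper's: after unwinding, both arguments seek a positive integer $m$ with $m=\lfloor u_i\sqrt{d_i}\rfloor$ for suitable $u_i$, i.e.\ an $m$ in the intersection of the Beatty-type sets $\{\lfloor u\sqrt{d_i}\rfloor:u\in\mathbb{Z}_{>0}\}$. Your observation that once $m=\lfloor u_i\sqrt{d_i}\rfloor$ the upper bound $d_iu_i^2\le m^2+2m$ is automatic (since $d_iu_i^2<(m+1)^2$ is a non-square integer) is also right; consequently the target box has \emph{fixed} relative size $1/\sqrt{d_i}$, and your description of it as a ``shrinking-but-not-too-fast target'' is misleading.

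The genuine gap is in your third step. Equidistribution of $t\mapsto(\sqrt{d_1}\,t,\dots,\sqrt{d_k}\,t)\bmod 1$ does not yield what you need: forcing all $\{\sqrt{d_i}\,t\}$ small for a single $t$ produces integers $v_i\approx t\sqrt{d_i}$ that differ with $i$ and cannot all serve as the common $m$. The sequence one must control is $m\mapsto(m/\sqrt{d_1},\dots,m/\sqrt{d_k})\bmod 1$, because ``$(m,m+1)$ contains some $u_i\sqrt{d_i}$'' is exactly the condition $\{m/\sqrt{d_i}\}>1-1/\sqrt{d_i}$. Furthermore, your parenthetical ``here one only needs each $\sqrt{d_i}\notin\mathbb{Q}$'' is false: density in the $k$-torus requires $1,1/\sqrt{d_1},\dots,1/\sqrt{d_k}$ (equivalently $1,\sqrt{d_1},\dots,\sqrt{d_k}$) to be $\mathbb{Q}$-linearly independent, which holds precisely because the $K_i$ are distinct real quadratic fields. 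This is exactly how the paper proceeds: it invokes Kronecker's theorem for the numbers $-1/\sqrt{j_i}$ to obtain $n>0$ and integers $m_i$ with $0<-n/\sqrt{j_i}+m_i<\epsilon$, hence $0<-n+m_i\sqrt{j_i}<1$, and takes $\alpha_i:=-n+m_i\sqrt{j_i}\in I_{2n}^{2,r}$. Replace your third step by this Kronecker argument and your proof is complete.
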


\begin{proof}
For each $K_i$, there exists a positive integer $j_i$ such that $K_i =
{\mathbb Q}(\sqrt{j_i})$ ($i \in \{1,2,\dots,k\}$).
Clearly, $1, \sqrt{j_1}, \dots, \sqrt{j_k}$ are linearly independent
over ${\mathbb Q}$, and so are
$
1, -\frac{1}{\sqrt{j_1}}, \dots, -\frac{1}{\sqrt{j_k}}.
$
Let $\epsilon$ be a positive number such that $\epsilon \sqrt{j_i} <
1$ for all $i \in \{1,2,\dots,k\}$.
By Kronecker's theorem \cite{Hardy}, there exist integers
$n>0, m_1, m_2, \dots, m_k$ such that
\[
0<-\frac{n}{\sqrt{j_i}} + m_i<\epsilon \qquad (i \in \{1,2,\dots,k\}).
\]
Thus, we have
$
0<-n  + m_i \sqrt{j_i} <\epsilon \sqrt{j_i} < 1$ $(i \in \{1,2,\dots,k\}).
$
Set $\alpha_i := -n  + m_i \sqrt{j_i}$ ($i \in \{1,2,\dots,k\}$).
Obviously, $\alpha_i$ is a quadratic algebraic integer in $(0,1)$, as
well as a primitive element of $K_i$.
Let $\alpha_i^{\sigma}$ be the conjugate of $\alpha_i$ that is
different from $\alpha_i$, namely $\alpha_i^{\sigma} = -n - m_i \sqrt{j_i}$.
Since $\alpha_i + \alpha_i^{\sigma} = -2n$, we see that all $\alpha_i$
($i \in \{1,2,\dots,k\}$) are included in $I_{2 n}^{2,r}$.
\end{proof}

We note here the following point.
One can obtain, in a trivial manner, a family of sets of quadratic
algebraic integers in $(0,1)$, which has the properties described in
Theorems \ref{Thm:QuadraticNoncoincidence} and
\ref{Thm:RealQuadraticVarietyI}.
One example is the family $I_{n}^{2,p}$ ($n \in {\mathbb Z}_{>0}$)
defined by
\[
I_{n}^{2,p} := \left\{ \left\langle \sqrt{p_1} \right\rangle,
\left\langle \sqrt{p_2} \right\rangle, \dots, \left\langle \sqrt{p_n} \right\rangle\right\},
\]
where $p_1, p_2, \dots, p_n$ are the first $n$ prime numbers.
The uniformity of $I_{n}^{2,p}$, however, is inferior to that of
$I_{n}^{2,r}$.
For example, $I_{8}^{2,p}$ consists of five elements less than $1/2$ and
three elements greater than $1/2$.
This is in contrast with the fact that for $I_{n}^{2,r}$ with even
$n$, the number of elements less than $1/2$ is the same as the number
of elements greater than $1/2$, and for $I_{n}^{2,r}$ with odd positive (resp. negative) $n$,
the number of elements less than $1/2$ is exactly one less (resp. more) than the
number of elements greater than $1/2$.
Likewise, there is no expression involving $S^{2,p} :=\bigcup_{n \ge
  1} I_{n}^{2,p}$, which is as simple as
\eqref{eq:RealQuadraticDisjointUnion} and which can cover the set of
all quadratic algebraic integers in ${\mathbb R}$.
Thus, what makes $I_{n}^{2,r}$ distinctive is not only
the properties described in
Theorems \ref{Thm:QuadraticNoncoincidence} and
\ref{Thm:RealQuadraticVarietyI}
but also other properties such as
those described in \propref{Prop:QuadraticUniformity} and
\thmref{Thm:RealQuadraticCover} (i), in addition to its simple definition.
In the next section we will see that $I_{n}^{2,i}$ occupies an
equivalent position in the imaginary quadratic domain.

In preparation for the next section, we give the following lemma.

\begin{lem}\label{Lem:RealQuadraticShiftedI}
Let $n$ be a positive integer.
The followings hold:
\begin{enumerate}[(i)]
\item For even $n$, we have
\[
I_{n}^{2,r} + \frac{n}{2} = \left\{\displaystyle\sqrt{-c} \in {\mathbb R} ~\middle|~  c \in {\mathbb Z}, -\left(\displaystyle\frac{n}{2} +1\right)^2 < c < -\left(\displaystyle\frac{n}{2}\right)^2\right\}.
\]

\item For odd $n$, we have
\begin{align*}
& I_{n}^{2,r} + \frac{n+1}{2}\\ &=\left\{\displaystyle\frac{1+\sqrt{1-4c}}{2} \in {\mathbb R} ~\middle|~  c \in {\mathbb Z}, -\left(\displaystyle\frac{n-1}{2}+1\right) \left(\displaystyle\frac{n-1}{2}+2\right) < c < -\displaystyle\frac{n-1}{2} \left(\displaystyle\frac{n-1}{2}+1\right) \right\}.
\end{align*}

\end{enumerate}
\end{lem}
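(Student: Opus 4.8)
The plan is to prove both parts by the affine change of variable that ``completes the square'' in the defining quadratic $X^2+nX+c$, and then to push the constraint $c\in\mathbb Z$, $-n\le c\le-1$ through that substitution. I will use the elementary fact (already invoked above, cf.\ \cite{SaitoChaos2016}) that for each $c\in\{-n,\dots,-1\}$ the polynomial $X^2+nX+c$ has exactly one root in $(0,1)$, namely its larger root $\frac{-n+\sqrt{n^2-4c}}{2}$ — the product of the two roots is $c<0$, so the other root is negative — and that these are precisely the $n$ elements of $I_n^{2,r}$; hence $c\mapsto\frac{-n+\sqrt{n^2-4c}}{2}$ is a bijection from $\{-n,\dots,-1\}$ onto $I_n^{2,r}$.

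For (i), I would take $n$ even, fix $\alpha\in I_n^{2,r}$ with $\alpha^2+n\alpha+c=0$, and set $\beta:=\alpha+\frac{n}{2}$. Substituting $\alpha=\beta-\frac{n}{2}$ gives $\beta^2=\left(\frac{n}{2}\right)^2-c$, and since $\alpha\in(0,1)$ and $n\ge1$ we have $\beta>0$, so $\beta=\sqrt{-c'}$ with $c':=c-\left(\frac{n}{2}\right)^2\in\mathbb Z$; this step is reversible, so $\alpha\leftrightarrow c'$ is again a bijection. It then remains to check that $-n\le c\le-1$ is equivalent to $-\left(\frac{n}{2}+1\right)^2<c'<-\left(\frac{n}{2}\right)^2$, which is immediate from $\left(\frac{n}{2}+1\right)^2=\left(\frac{n}{2}\right)^2+n+1$: the two-sided bound on $c'$ unwinds to $-n-1<c<0$. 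Finally, since $c'\mapsto\sqrt{-c'}$ is injective on this range, the resulting correspondence of integers lifts to the asserted equality of sets (both inclusions at once).

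For (ii), I would take $n$ odd, put $k:=\frac{n-1}{2}\in\mathbb Z$ and $\beta:=\alpha+\frac{n+1}{2}=\alpha+(k+1)$. Substituting $\alpha=\beta-(k+1)$ into $\alpha^2+n\alpha+c=0$ and using $n-2(k+1)=-1$ and $(k+1)^2-n(k+1)=-k(k+1)$ yields $\beta^2-\beta+c'=0$ with $c':=c-k(k+1)$, equivalently $1-4c'=n^2-4c$; in particular $1-4c'>0$. Since $\alpha>0$ and $\frac{n+1}{2}\ge1$ force $\beta\ge1$, $\beta$ must be the larger root $\frac{1+\sqrt{1-4c'}}{2}$ of $X^2-X+c'$. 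Translating $-n\le c\le-1$ through $c'=c-k(k+1)$ and using $k(k+1)=\frac{n-1}{2}\cdot\frac{n+1}{2}$ together with $2(k+1)=n+1$, the bound $-n-1<c<0$ becomes $-(k+1)(k+2)<c'<-k(k+1)$, which is exactly the stated range; injectivity of $c'\mapsto\frac{1+\sqrt{1-4c'}}{2}$ on this range then upgrades the two inclusions to equality.

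I do not expect a real obstacle here: the whole content is a change of variables plus careful bookkeeping of the bounds. The points needing (routine) care are that $\alpha\in(0,1)$ and $n\ge1$ really do pin down the correct square root / larger root in each case, and that strict versus non-strict inequalities are carried through the shift correctly so that the integer ranges on the two sides match exactly — a convenient sanity check being that each side has $n$ elements.
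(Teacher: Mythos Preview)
Your proposal is correct and follows essentially the same approach as the paper: the paper also writes $\alpha=\frac{-n+\sqrt{n^2-4c}}{2}$, so $\alpha+\frac{n}{2}=\sqrt{-c'}$ with $c'=c-(n/2)^2$, then verifies that $-n\le c\le -1$ translates to $-(n/2+1)^2<c'<-(n/2)^2$, and dismisses (ii) with ``in the same way.'' Your write-up is simply more explicit about the choice of root and the bookkeeping of inequalities, but the underlying argument is identical.
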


\begin{proof}
We prove (i).
Since $n>0$, $\alpha \in (0,1)$ satisfying $\alpha^2+n\alpha+c=0$ is
given by $\alpha = (-n+\sqrt{n^2-4c})/2$
(cf. \eqref{eq:RealQuadraticI}).
Thus, $\alpha + n/2$ can be written as $\alpha + n/2 = \sqrt{-c'}$,
where $c'= - (n/2)^2 + c$.
As easily verified, $c \in {\mathbb Z}$ satisfies $-n \le c \le -1$ if and
only if $c' \in {\mathbb Z}$ satisfies
$-\left(n/2 +1\right)^2 < c' < -\left(n/2\right)^2$.

We can prove (ii) in the same way as (i).
\end{proof}

\section{Properties of $I_{n}^{2,i}$}

In this section, we consider the imaginary quadratic case, which is
the counterpart of the real quadratic case considered in Section
\ref{sec:PropertiesofI_{n}^{2,r}}.
For $z = x + \sqrt{-1} y$ ($x,y \in {\mathbb R}$), we put $\Re(z) :=
x$, $\Im(z) := y$, and $\bar{z} := x - \sqrt{-1} y$, as usual.
For a subset $S$ of ${\mathbb C} \setminus {\mathbb R}$, we denote by
$\Im(S)$ the set $\left\{ \Im(\alpha) ~\vert~ \alpha \in S \right\}$,
and by $\left\langle \Im(S) \right\rangle$ the set $\left\{
\left\langle \Im(\alpha) \right\rangle ~\vert~ \alpha \in S \right\}$.
In this paper, we identify the uniformity of a set $S \subset
{\mathbb C} \setminus {\mathbb R}$ as the uniformity of the set
$\left\langle \Im(S) \right\rangle$ in the unit interval.

We obtain the following result concerning the uniformity of
$I_{n}^{2,i}$ (see \eqref{eq:ImaginaryQuadraticDefinition} for the
definition of $I_{n}^{2,i}$).

\begin{thm}\label{Thm:ImaginaryQuadraticUniformity}
The elements of $\left\langle \Im(I_{n}^{2,i}) \right\rangle$
are distributed almost uniformly in the
unit interval for sufficiently large $n$.
\end{thm}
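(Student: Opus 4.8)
The plan is to handle the two parities of $n$ separately, matching the split in the definition \eqref{eq:ImaginaryQuadraticDefinition}, and in each case to write out the elements of $\langle\Im(I_n^{2,i})\rangle$ explicitly as an increasing list of shifted real square roots whose consecutive gaps can be estimated by the elementary identity $\sqrt{a+\delta}-\sqrt{a}=\delta/(\sqrt{a+\delta}+\sqrt{a})$. Throughout, the key preliminary observation is that for the prescribed ranges of $c$ all the imaginary parts in question lie in a single open interval $(k,k+1)$ with $k$ an integer, so that $\langle\,\cdot\,\rangle$ acts on $\Im(I_n^{2,i})$ by subtracting the fixed integer $k$ and, in particular, preserves the ordering.

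For even $n$ I would first note that $\Im(\sqrt{-c})=\sqrt{c}$ for $c>0$, so \eqref{eq:ImaginaryQuadraticDefinition} gives $\Im(I_n^{2,i})=\{\sqrt{c}\mid c\in\mathbb{Z},\ (n/2)^2<c<(n/2+1)^2\}$. Replacing $c$ by $-c$ in \lemref{Lem:RealQuadraticShiftedI}(i) identifies this with $I_n^{2,r}+n/2$. Since $n/2\in\mathbb{Z}$ and $I_n^{2,r}\subset(0,1)$, we get $\langle\Im(I_n^{2,i})\rangle=\langle I_n^{2,r}+n/2\rangle=I_n^{2,r}$, and \propref{Prop:QuadraticUniformity} immediately yields almost uniformity for sufficiently large even $n$.

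For odd $n$ I would proceed directly. Put $k:=(n-1)/2$, so the condition $((n-1)/2)^2+1\le c\le((n+1)/2)^2$ reads $k^2+1\le c\le(k+1)^2$; writing $c=k^2+j$ this is $1\le j\le 2k+1=:N$, and for these $c$ one checks $k<\tfrac12\sqrt{4c-1}=\sqrt{k^2+j-1/4}<k+1$. Hence the $N$ elements of $\langle\Im(I_n^{2,i})\rangle$, in increasing order, are $x_j=\sqrt{k^2+j-1/4}-k$, and the neighboring distances are
\[
\Delta_j=\sqrt{k^2+j+3/4}-\sqrt{k^2+j-1/4}=\frac{1}{\sqrt{k^2+j+3/4}+\sqrt{k^2+j-1/4}}\qquad(1\le j\le N-1).
\]
For these $j$ both radicands lie strictly between $k^2$ and $(k+1)^2$, so the denominator $D_j$ of $\Delta_j$ satisfies $2k<D_j<2k+2$, i.e. $|D_j-N|<1$; therefore $|\Delta_j-1/N|=|N-D_j|/(ND_j)<1/(N(N-1))=O(1/N^2)$, and \eqref{eq:MaxOrderInequality} gives almost uniformity for sufficiently large odd $n$. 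Combining the two cases proves the theorem.

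The proof is a routine computation rather than a hard argument; the only place that genuinely requires attention is verifying that the index range defining $I_n^{2,i}$ is exactly the range of $c$ for which the corresponding imaginary part has integer part $k$ ($=n/2$ or $(n-1)/2$), since it is this fact that reduces $\langle\,\cdot\,\rangle$ to a single translation and keeps the sorted order of $\langle\Im(I_n^{2,i})\rangle$ aligned with that of $\Im(I_n^{2,i})$. I note that the even case can equally well be done by the same direct telescoping estimate (with $N=2k$, $x_j=\sqrt{k^2+j}-k$), so the appeal to \propref{Prop:QuadraticUniformity} via \lemref{Lem:RealQuadraticShiftedI} is a convenience rather than a necessity.
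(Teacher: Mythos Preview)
Your proof is correct and follows essentially the same approach as the paper: the even case is reduced to \propref{Prop:QuadraticUniformity} via \lemref{Lem:RealQuadraticShiftedI}(i), and the odd case is handled by the same explicit gap estimate yielding the bound $1/(N(N-1))$. The only cosmetic difference is that the paper invokes the mean value theorem to bound $\Delta_j$, whereas you use the rationalization identity $\sqrt{a+1}-\sqrt{a}=1/(\sqrt{a+1}+\sqrt{a})$; both give the same sandwich $1/(n+1)<\Delta_j<1/(n-1)$.
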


\begin{proof}
For even $n$, it follows from \eqref{eq:ImaginaryQuadraticDefinition} and
\lemref{Lem:RealQuadraticShiftedI} (i) that
\begin{equation}\label{eq:SetImAlpha}
\Im(I_{n}^{2,i}) = I_{n}^{2,r} + \frac{n}{2}.
\end{equation}
The uniformity of $I_{n}^{2,i}$ then follows from
\propref{Prop:QuadraticUniformity}.

For odd $n$, we see that
$\Im\left(\left(1+\sqrt{1-4c}\right)/2\right)=\sqrt{c-1/4}$.
Let $m:= (n-1)/2$.
Since $m^2+1 \le c \le (m+1)^2$, we see that $m < \sqrt{c-1/4} < m+1$.
We assume from now on that $n \neq 1$.
The distances $\Delta_{i}$ ($i \in \left\{1,2,\dots, 2 m\right\}$)
between two neighboring elements of $\left\langle \Im(I_{n}^{2,i})
\right\rangle$ are given by
$\Delta_{i} = \sqrt{(m^2+ i +1) -1/4} - \sqrt{(m^2+ i) -1/4}$.
By the mean value theorem, there exists $\gamma_{i} \in \left(m^2+ i, m^2+
i +1 \right)$ such that $\Delta_{i} = \left(2
\sqrt{\gamma_{i}-1/4}\right)^{-1}$.
Thus, we have
\[
\frac{1}{2 m +2}
< \frac{1}{2\sqrt{(m^2+ 2 m +1)-1/4}} < \Delta_{i} < \frac{1}{2\sqrt{(m^2+1)-1/4}}
< \frac{1}{2 m}
\]
for every $i \in \left\{1,2,\dots, 2 m\right\}$,
so that
\[
\frac{1}{n+1}  - \frac{1}{n}
<  \Delta_{i} - \frac{1}{n}
< \frac{1}{n-1}  - \frac{1}{n},
\]
which implies
\[
\max_{i \in \left\{1,2,\dots, n-1\right\}} \left| \Delta_{i} -\frac{1}{n} \right|
< \frac{1}{n (n-1)}.
\]
Thus, the elements of $\left\langle \Im(I_{n}^{2,i})
\right\rangle$ are
distributed in the unit interval almost uniformly for sufficiently large $n$.
\end{proof}

It follows from \eqref{eq:SetImAlpha} together with the fact stated before
\lemref{Lem:RealQuadraticShiftedI} that for even $n$,
$\left\langle \Im(I_{n}^{2,i}) \right\rangle$
contains an equal number of elements less and
greater than $1/2$.
Also for odd $n$, it is easy to verify that the number of elements less
than $1/2$ is exactly one less than the number of elements greater
than $1/2$, similarly as in $I_{n}^{2,r}$ with $n \ge 1$.

Next we show that $I_{n}^{2,i}$ is also arithmetically independent.

\begin{thm}\label{Thm:ImaginaryQuadraticNoncoincidence}
Let $n$ be a positive integer. Then, ${\mathbb
Q}(\alpha) \neq {\mathbb Q}(\beta)$ holds for all $\alpha, \beta
\in I_{n}^{2,i}$ with $\alpha \neq \beta$.
\end{thm}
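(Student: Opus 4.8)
The plan is to adapt the norm argument from the proof of \thmref{Thm:QuadraticNoncoincidence}, using complex conjugation in place of the real conjugation used there. Suppose for contradiction that $\alpha \neq \beta$ lie in $I_{n}^{2,i}$ with $K := \mathbb{Q}(\alpha) = \mathbb{Q}(\beta)$. By definition the elements of $I_{n}^{2,i}$ are non-real, so $K$ is an imaginary quadratic field whose unique non-trivial automorphism is the restriction of complex conjugation; hence the conjugate of $\alpha$ other than $\alpha$ itself is $\bar{\alpha}$, and likewise the conjugate of $\beta$ other than $\beta$ is $\bar{\beta}$. Since $\alpha$ and $\beta$ are roots of the monic integer polynomials $X^{2}-X+c$ or $X^{2}+c$ (cf. \eqref{eq:ImaginaryQuadraticDefinition}), they are algebraic integers, so $\gamma := \alpha - \beta$ is a nonzero algebraic integer of $K$ and
\[
N_{K/\mathbb{Q}}(\gamma) = \gamma\bar{\gamma} = \left|\alpha - \beta\right|^{2}
\]
is a positive rational integer.

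Next I would prove the estimate $\left|\alpha - \beta\right| < 1$, which together with the displayed identity forces $N_{K/\mathbb{Q}}(\gamma)=0$, i.e.\ $\alpha = \beta$, contradicting $\alpha \neq \beta$. The key point is that all elements of $I_{n}^{2,i}$ share the same real part while their imaginary parts lie in an open interval of length one. Indeed, for even $n$ every element is of the form $\sqrt{-c} = \sqrt{c}\,\sqrt{-1}$ with $\left(n/2\right)^{2} < c < \left(n/2+1\right)^{2}$, so $\Re(\alpha) = \Re(\beta) = 0$ and $\Im(\alpha), \Im(\beta) \in \left(n/2,\, n/2+1\right)$; for odd $n$ every element is of the form $\left(1+\sqrt{1-4c}\right)/2$ with imaginary part $\sqrt{c-1/4}$, so $\Re(\alpha) = \Re(\beta) = 1/2$ and, by the estimate $(n-1)/2 < \sqrt{c-1/4} < (n+1)/2$ already established in the proof of \thmref{Thm:ImaginaryQuadraticUniformity}, $\Im(\alpha), \Im(\beta) \in \left((n-1)/2,\, (n+1)/2\right)$. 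In either case $\left|\alpha - \beta\right| = \left|\Im(\alpha) - \Im(\beta)\right| < 1$.

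I do not anticipate a genuine obstacle here: the argument is essentially a transcription of the real quadratic case, the only mildly delicate point being the length-one localization of the imaginary parts in the odd case, which is exactly the inequality $m < \sqrt{c-1/4} < m+1$ (with $m = (n-1)/2$) used in \thmref{Thm:ImaginaryQuadraticUniformity} and is immediate from $m^{2}+1 \le c \le (m+1)^{2}$. It is also worth remarking that the degenerate values of $n$ for which $I_{n}^{2,i}$ has at most one element (such as $n=1$) are covered vacuously by this argument.
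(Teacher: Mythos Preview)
Your argument is correct and coincides with the paper's proof in the odd case; for even $n$ the paper instead reduces to \thmref{Thm:QuadraticNoncoincidence} via \lemref{Lem:RealQuadraticShiftedI}(i) and the observation that ${\mathbb Q}(\sqrt{-c}) \neq {\mathbb Q}(\sqrt{-c'})$ whenever ${\mathbb Q}(\sqrt{c}) \neq {\mathbb Q}(\sqrt{c'})$. Your uniform treatment---observing directly that in both parities the elements share a common real part and have imaginary parts confined to an open interval of length one---is a mild streamlining of the paper's version, avoiding the detour through the real quadratic theorem.
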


\begin{proof}
We first consider the case where $n$ is even.
We see that ${\mathbb Q}(\sqrt{-c}) \neq {\mathbb Q}(\sqrt{-c'})$
holds if $c$ and $c'$ are positive integers satisfying ${\mathbb
  Q}(\sqrt{c}) \neq {\mathbb Q}(\sqrt{c'})$.
Thus, the assertion for even $n$ follows from
\thmref{Thm:QuadraticNoncoincidence} together with
\eqref{eq:ImaginaryQuadraticDefinition} and
\lemref{Lem:RealQuadraticShiftedI} (i).

Now consider the case where $n$ is odd.
Assume that there are $\alpha, \beta \in I_{n}^{2,i}$ such that
$\alpha \neq \beta$ and
${\mathbb Q}(\alpha) = {\mathbb Q}(\beta)$.
Set $K:={\mathbb Q}(\alpha) = {\mathbb Q}(\beta)$.
From the proof of \thmref{Thm:ImaginaryQuadraticUniformity} we have
\begin{align*}
\alpha = \frac{1}{2} + \left(m + \left\langle
\Im(\alpha) \right\rangle \right)\sqrt{-1},\qquad
\beta = \frac{1}{2} + \left(m + \left\langle
\Im(\beta) \right\rangle \right)\sqrt{-1},
\end{align*}
where $m= (n-1)/2$.
This gives
$
\left|\alpha - \beta\right|=\left|\left\langle
\Im(\alpha) \right\rangle - \left\langle
\Im(\beta) \right\rangle \right| < 1
$, so that
$
\left|\left(\alpha - \beta\right) \left(\overline{\alpha - \beta}\right)\right| < 1.
$
Since $\left(\alpha - \beta\right) \left(\overline{\alpha -
  \beta}\right)$ is an algebraic integer as well as the norm of
$\alpha - \beta$ as an element of $K$, it is an integer.
Thus, $\left(\alpha - \beta\right) \left(\overline{\alpha -
  \beta}\right) = 0$, which implies $\alpha=\beta$.
Since $\alpha \neq \beta$, we get a contradiction.
\end{proof}

We now turn to consider to what extent the union of $I_{n}^{2,i}$ or a
single $I_{n}^{2,i}$ can cover the imaginary quadratic fields in the sense of
Theorems \ref{Thm:RealQuadraticCover} and
\ref{Thm:RealQuadraticVarietyI}.
We let
\[
S^{2,i}:=\bigcup_{n \ge 1} I_{n}^{2,i}.
\]
Since $4c-1$ with $c \in {\mathbb Z}_{>0}$ cannot be a square number,
we have $S^{2,i} \cap {\mathbb Q}(\sqrt{-1}) = \emptyset$.
Set
\begin{equation}\label{eq:S2iHat}
\hat{S}^{2,i}:= S^{2,i} \cup \left\{ n \sqrt{-1}  ~\vert~ n \in {\mathbb Z}_{>0} \right\}.
\end{equation}
We will see below that $\hat{S}^{2,i}$ thus defined is full generative
with respect to imaginary quadratic fields, and therefore $S^{2,i}$ is
full generative with respect to imaginary quadratic fields except
${\mathbb Q}(\sqrt{-1})$ (see Theorems
\ref{Thm:ImaginaryQuadraticCover} and
\ref{Thm:ImaginaryQuadraticVarietyI}).

To simplify the exposition below, we set
\begin{align*}
  (b,c)_+ := (-b+\sqrt{b^2-4c})/2,\qquad
  (b,c)_- := (-b-\sqrt{b^2-4c})/2.
\end{align*}
From \eqref{eq:ImaginaryQuadraticDefinition} and \eqref{eq:S2iHat}, we
see that
\begin{align*}
  \hat{S}^{2,i} = \left\{ (b,c)_+  ~\vert~ b \in \left\{0,-1\right\}, c \in {\mathbb Z}_{>0} \right\},\qquad
  -\hat{S}^{2,i} = \left\{ (b,c)_-  ~\vert~ b \in \left\{0,1\right\}, c \in {\mathbb Z}_{>0} \right\}.
\end{align*}
The set of all quadratic algebraic integers in ${\mathbb C}
\setminus {\mathbb R}$ is given by
\[
\left\{ (b,c)_\pm  ~\vert~ (b, c) \in {\mathbb Z}^2,  c> b^2/4 \right\}.
\]
An easy calculation shows that for any integer $n$, we have
\[
(b,c)_\pm + n = (-2n + b, n^2 -b n +c)_\pm.
\]

\begin{thm}\label{Thm:ImaginaryQuadraticCover}
The followings hold:
\begin{enumerate}[(i)]
\item The set of all quadratic algebraic integers in ${\mathbb C}
\setminus {\mathbb R}$
  is expressed as
\begin{equation*}%\label{eq:RealQuadraticDisjointUnion}
\biguplus_{\substack{n \in {\mathbb Z}\\\varepsilon \in \left\{-1,+1\right\}}} \left(\varepsilon \hat{S}^{2,i} + n \right).
\end{equation*}
The set of all quadratic algebraic integers in ${\mathbb C}
\setminus {\mathbb R}$ except those in ${\mathbb Q}(\sqrt{-1})$
is expressed as
\begin{equation*}%\label{eq:RealQuadraticDisjointUnion}
\biguplus_{\substack{n \in {\mathbb Z}\\\varepsilon \in \left\{-1,+1\right\}}} \left(\varepsilon S^{2,i} + n \right).
\end{equation*}

\item In particular, $\hat{S}^{2,i}$ includes a primitive element of
any imaginary quadratic field;
$S^{2,i}$ includes a primitive element of any imaginary
quadratic field except ${\mathbb Q}(\sqrt{-1})$.
\end{enumerate}
\end{thm}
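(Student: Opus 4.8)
The plan is to follow the pattern of the proof of \thmref{Thm:RealQuadraticCover}, using the parametrization of the non-real quadratic algebraic integers by pairs $(b,c)\in\mathbb{Z}^2$ with $c>b^2/4$ recorded just before the statement, the translation formula $(b,c)_\pm+n=(-2n+b,\,n^2-bn+c)_\pm$, and the elementary identity $-(b,c)_+=(-b,c)_-$. First I would show that $\bigcup_{\varepsilon\in\{-1,+1\},\,n\in\mathbb{Z}}(\varepsilon\hat{S}^{2,i}+n)$ exhausts the non-real quadratic algebraic integers. Such an $\alpha$ has a well-defined sign of $\Im(\alpha)$, hence is uniquely $(b,c)_+$ if $\Im(\alpha)>0$ and $(b,c)_-$ if $\Im(\alpha)<0$, with $b=-(\alpha+\bar\alpha)$ and $c=\alpha\bar\alpha$; by $-(b,c)_+=(-b,c)_-$ it suffices to treat $(b,c)_+$. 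Taking $n=b/2$ for even $b$ and $n=(b+1)/2$ for odd $b$ in the translation formula brings the first coordinate into $\{0,-1\}$, while the new second coordinate $n^2-bn+c$ is a positive integer since the translate is again a non-real quadratic algebraic integer; hence $\alpha\in\hat{S}^{2,i}+(-n)$, and symmetrically $(b,c)_-\in-\hat{S}^{2,i}+n'$. The reverse inclusion is immediate from the displayed description of all non-real quadratic algebraic integers.

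Next I would establish disjointness. The sign of $\Im$ is invariant under integer translation and positive on $\hat{S}^{2,i}$, so every element of $\hat{S}^{2,i}+n$ has positive imaginary part and every element of $-\hat{S}^{2,i}+n$ negative; this separates the two values of $\varepsilon$. With $\varepsilon$ fixed, if $\varepsilon\gamma_1+n_1=\varepsilon\gamma_2+n_2$ with $\gamma_j=(b_j,c_j)_+\in\hat{S}^{2,i}$, comparing real parts yields $(b_2-b_1)/2=\varepsilon(n_2-n_1)\in\mathbb{Z}$, so $b_1\equiv b_2\pmod 2$; since $\{0,-1\}$ meets each class modulo $2$ exactly once, $b_1=b_2$, whence $n_1=n_2$ and then $c_1=c_2$ from the imaginary parts. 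Thus the representation $\alpha=\varepsilon\gamma+n$ with $\gamma\in\hat{S}^{2,i}$ is unique, which is exactly the disjointness asserted in the first displayed identity of (i).

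For the second identity I would use that $\hat{S}^{2,i}=S^{2,i}\uplus\{m\sqrt{-1}\mid m\in\mathbb{Z}_{>0}\}$, which follows from \eqref{eq:S2iHat} and $S^{2,i}\cap\mathbb{Q}(\sqrt{-1})=\emptyset$, and that $\bigcup_{\varepsilon,\,n}\bigl(\varepsilon\{m\sqrt{-1}\mid m>0\}+n\bigr)=\{a+b\sqrt{-1}\mid a\in\mathbb{Z},\,b\in\mathbb{Z}\setminus\{0\}\}$ is precisely the set of non-real quadratic algebraic integers in $\mathbb{Q}(\sqrt{-1})$, whereas each block $\varepsilon S^{2,i}+n$ is disjoint from $\mathbb{Q}(\sqrt{-1})$ because $S^{2,i}$ is and $\mathbb{Q}(\sqrt{-1})$ is stable under $\alpha\mapsto-\alpha$ and $\alpha\mapsto\alpha+n$. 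Since the union in the first identity is disjoint, deleting the blocks coming from $\{m\sqrt{-1}\}$ removes exactly the non-real quadratic algebraic integers lying in $\mathbb{Q}(\sqrt{-1})$ and leaves $\biguplus_{\varepsilon,\,n}(\varepsilon S^{2,i}+n)$, which is the second identity.

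Part (ii) is then immediate: an imaginary quadratic field $K=\mathbb{Q}(\sqrt{-d})$ contains the non-real quadratic algebraic integer $\sqrt{-d}$, which is a primitive element of $K$, and by (i) we may write it as $\varepsilon\gamma+n$ with $\gamma\in\hat{S}^{2,i}$, and with $\gamma\in S^{2,i}$ when $K\neq\mathbb{Q}(\sqrt{-1})$ by the second identity, so that $\mathbb{Q}(\gamma)=\mathbb{Q}(\sqrt{-d})=K$. I do not anticipate a genuine obstacle; the only slightly delicate points are keeping the disjointness argument watertight via the invariants $\operatorname{sign}(\Im(\alpha))$ and $\langle\Re(\alpha)\rangle$, and the bookkeeping in the last part that matches the ``extra'' generators $m\sqrt{-1}$ with exactly the $\mathbb{Q}(\sqrt{-1})$-portion of the disjoint union.
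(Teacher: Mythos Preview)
Your proposal is correct and follows essentially the same approach as the paper: both use the parametrization $(b,c)_\pm$, the translation formula $(b,c)_\pm+n=(-2n+b,n^2-bn+c)_\pm$, and the reduction of the first coordinate into $\{0,-1\}$ according to the parity of $b$; the paper computes $\hat{S}^{2,i}+n$ explicitly and takes the union over $n$, whereas you start from an arbitrary $(b,c)_+$ and locate the appropriate $n$, but these are two sides of the same calculation. Your disjointness argument via $\operatorname{sign}(\Im\alpha)$ and the parity of $b$ spells out what the paper summarises as ``easy to see'', and your derivation of the second identity by excising the $\{m\sqrt{-1}\}$-blocks is a slightly more explicit version of the paper's observation that integer translation preserves membership in $\mathbb{Q}(\sqrt{-1})$.
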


\begin{proof}
We have
\begin{align*}
\hat{S}^{2,i} + n &= \left( \left\{ (0,c)_+  ~\vert~ c \in {\mathbb Z}_{>0} \right\} + n \right) \bigcup \left( \left\{ (-1,c)_+  ~\vert~ c \in {\mathbb Z}_{>0} \right\} + n \right),\\
 &= \left\{ (-2n, n^2 +c)_+  ~\vert~ c \in {\mathbb Z}_{>0} \right\} \bigcup \left\{ (-2n -1, n^2 +n +c)_+  ~\vert~ c \in {\mathbb Z}_{>0} \right\}.
\end{align*}
Considering that $(-2n)^2/4 = n^2$, $(-2n -1)^2/4 = n^2 +n +1/4$, and
$\cup_{n \in {\mathbb Z}} \{-2n, -2n -1\}={\mathbb Z}$, we obtain
\[
\bigcup_{n \in {\mathbb Z}} \left(\hat{S}^{2,i} + n \right)
= \left\{ (b,c)_+  ~\vert~ (b, c) \in {\mathbb Z}^2,  c> b^2/4 \right\}.
\]
By a similar argument, we obtain
\[
\bigcup_{n \in {\mathbb Z}} \left(-\hat{S}^{2,i} + n \right)
= \left\{ (b,c)_-  ~\vert~ (b, c) \in {\mathbb Z}^2,  c> b^2/4 \right\}.
\]
Therefore
\begin{align*}
  \left\{ (b,c)_\pm  ~\vert~ (b, c) \in {\mathbb Z}^2,  c> b^2/4 \right\} &=
  \left( \bigcup_{n \in {\mathbb Z}} \left(\hat{S}^{2,i} + n \right) \right)
  \bigcup
  \left( \bigcup_{n \in {\mathbb Z}} \left(-\hat{S}^{2,i} + n \right) \right).
\end{align*}
It is easy to see that the family $\left\{ \hat{S}^{2,i} + n ~\vert~ n \in
{\mathbb Z} \right\} \cup \left\{ -\hat{S}^{2,i} + n ~\vert~ n \in {\mathbb
  Z} \right\}$ is mutually disjoint.
The second statement of (i) is derived from the first statement by
considering the fact that for $n \in {\mathbb Z}$, $(b,c)_\pm + n \in
{\mathbb Q}(\sqrt{-1})$ if and only if $(b,c)_\pm \in {\mathbb
  Q}(\sqrt{-1})$.

As in the proof of \thmref{Thm:RealQuadraticCover}, (ii) follows from
(i).
\end{proof}

The following theorem is the imaginary counterpart of
\thmref{Thm:RealQuadraticVarietyI}.

\begin{thm}\label{Thm:ImaginaryQuadraticVarietyI}
Let $k$ be a positive integer.
Let $K_1, K_2, \dots, K_k$ be distinct imaginary quadratic fields
other than ${\mathbb Q}(\sqrt{-1})$.
Then, there exists a positive integer $n$ such that $I_{2 n}^{2,i}$
contains a primitive element of every $K_i$ ($i \in \{1,2,\dots,k\}$).
\end{thm}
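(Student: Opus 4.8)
The plan is to follow the proof of \thmref{Thm:RealQuadraticVarietyI} verbatim, replacing real square roots by imaginary ones. Write each $K_i = {\mathbb Q}(\sqrt{-d_i})$, where $d_i$ is a positive squarefree integer; since $K_i \neq {\mathbb Q}(\sqrt{-1})$ we have $d_i \neq 1$, and since the $K_i$ are distinct the $d_i$ are pairwise distinct. Because the subscript $2n$ is even, \eqref{eq:ImaginaryQuadraticDefinition} reads
\[
I_{2n}^{2,i} = \left\{ \sqrt{-c} \in {\mathbb C}\setminus{\mathbb R} ~\middle|~ c \in {\mathbb Z},\ n^2 < c < (n+1)^2 \right\}.
\]
So it suffices to produce a positive integer $n$ and positive integers $m_1, \dots, m_k$ with $n^2 < m_i^2 d_i < (n+1)^2$ for all $i$: then $\sqrt{-m_i^2 d_i} = m_i\sqrt{-d_i}$ is a root of the monic integer polynomial $X^2 + m_i^2 d_i$, is non-real (as $m_i^2 d_i > 0$), has degree two over ${\mathbb Q}$ (as $d_i \neq 1$ is squarefree, so $m_i^2 d_i$ is not a perfect square), and generates $K_i$; hence it is a primitive element of $K_i$ lying in $I_{2n}^{2,i}$.

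The inequalities $n^2 < m_i^2 d_i < (n+1)^2$ are equivalent to $0 < m_i\sqrt{d_i} - n < 1$, and these I would arrange simultaneously exactly as in \thmref{Thm:RealQuadraticVarietyI}, via Kronecker's theorem \cite{Hardy}. First note that $1, 1/\sqrt{d_1}, \dots, 1/\sqrt{d_k}$ are linearly independent over ${\mathbb Q}$: since $1/\sqrt{d_i} = \sqrt{d_i}/d_i$, any rational relation among them is a rational relation among $1, \sqrt{d_1}, \dots, \sqrt{d_k}$, i.e.\ among $\sqrt{e}$ for $e$ in the set of distinct squarefree positive integers $\{1, d_1, \dots, d_k\}$ (distinct because the $d_i$ are distinct and $\neq 1$), and the square roots of distinct squarefree positive integers are ${\mathbb Q}$-linearly independent. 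Fixing $\epsilon > 0$ with $\epsilon\sqrt{d_i} < 1$ for all $i$, Kronecker's theorem yields integers $n > 0$ and $m_1, \dots, m_k$ with $0 < -n/\sqrt{d_i} + m_i < \epsilon$; multiplying by $\sqrt{d_i}$ gives $0 < -n + m_i\sqrt{d_i} < \epsilon\sqrt{d_i} < 1$, and in particular $m_i > n/\sqrt{d_i} > 0$. This supplies the required $n$ and $m_i$, completing the proof.

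The only genuine ingredient beyond routine bookkeeping is the linear-independence input needed to invoke Kronecker's theorem, which is the same classical fact already used in \thmref{Thm:RealQuadraticVarietyI}; everything else is a direct transcription of that argument. As an alternative route, one could instead deduce the theorem from \thmref{Thm:RealQuadraticVarietyI} through \eqref{eq:SetImAlpha} and \lemref{Lem:RealQuadraticShiftedI}~(i), which identify $\Im(I_{2n}^{2,i})$ with $\{\sqrt{e} \mid e \in {\mathbb Z},\ n^2 < e < (n+1)^2\}$; but the self-contained Kronecker argument above is the shortest, so I would present that one.
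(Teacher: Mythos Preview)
Your proof is correct. The paper takes the alternative route you mention at the end---it invokes \thmref{Thm:RealQuadraticVarietyI} as a black box and then translates via \lemref{Lem:RealQuadraticShiftedI}~(i)---whereas your primary argument inlines the Kronecker step from that theorem's proof; the two are the same argument organized slightly differently.
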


\begin{proof}
For each $K_i$, there exists a square-free integer $j_i \ge 2$ such that $K_i =
{\mathbb Q}(\sqrt{- j_i})$ ($i \in \{1,2,\dots,k\}$).
We define the real quadratic fields $K_i'$ ($i \in \{1,2,\dots,k\}$)
by $K_i' := {\mathbb Q}(\sqrt{j_i})$.
By \thmref{Thm:RealQuadraticVarietyI}, there exists a positive integer
$n$ such that $I_{2 n}^{2,r}$ contains a primitive element of every
$K_i'$.
Obviously, $I_{2 n}^{2,r} + n$ also contains a primitive element of
every $K_i'$.
Using \lemref{Lem:RealQuadraticShiftedI} (i), we express this set
as
\[
I_{2 n}^{2,r} + n = \left\{\displaystyle\sqrt{c} \in {\mathbb R} ~\middle|~  c \in {\mathbb Z}, n^2 < c < \left(n +1\right)^2 \right\}.
\]
Thus, for each $j_i$, there exists an integer $c_i$ in the range $n^2
< c_i < \left(n +1\right)^2$ such that $c_i = m_i^2 j_i$ for some
integer $m_i$.
By \eqref{eq:ImaginaryQuadraticDefinition} we have
\[
I_{2 n}^{2,i} = \left\{\displaystyle\sqrt{-c} \in {\mathbb C} \setminus {\mathbb R} ~\middle|~  c \in {\mathbb Z}, n^2 < c < \left(n +1\right)^2 \right\},
\]
and we find that $\sqrt{-c_i} \in I_{2 n}^{2,i}$ is a primitive
element of $K_i = {\mathbb Q}(\sqrt{- j_i})$.
\end{proof}

Lastly in this section, it is worth noting that as in the real
quadratic case, one can define a set $\tilde{I}_{n}^{2,i}$ which is
twin to $I_{n}^{2,i}$ by
\begin{align}\label{eq:ImaginaryQuadraticSymmetry}
\tilde{I}_{n}^{2,i} &:= \left\{1-\alpha \mid \alpha \in I_{n}^{2,i}\right\}
\qquad n = 1, 2, \dots,
\end{align}
see \eqref{eq:RealQuadraticSymmetry}.
By using \eqref{eq:ImaginaryQuadraticDefinition}, we obtain explicit
formulae for $\tilde{I}_{n}^{2,i}$:
\begin{align*}%\label{eq:ImaginaryQuadraticTwinDefinition}
\tilde{I}_{n}^{2,i} = \left\{
\begin{array}{ll}
\left\{ (-1,c)_- \in {\mathbb C} \setminus {\mathbb R} ~\middle|~  c \in {\mathbb Z}, \left(\displaystyle\frac{n-1}{2}\right)^2 +1\le c \le \left(\displaystyle\frac{n+1}{2}\right)^2\right\} &   \textrm{~  if~} n \textrm{~is odd}, \\
\left\{ (-2,c)_- \in {\mathbb C} \setminus {\mathbb R} ~\middle|~  c \in {\mathbb Z}, \left(\displaystyle\frac{n}{2}\right)^2 +1 < c < \left(\displaystyle\frac{n}{2} +1\right)^2 +1 \right\} &   \textrm{~  if~} n \textrm{~is even}.
\end{array}\right.
\end{align*}
Due to definition \eqref{eq:ImaginaryQuadraticSymmetry},
$\tilde{I}_{n}^{2,i}$ has the properties described in Theorems
\ref{Thm:ImaginaryQuadraticNoncoincidence} and
\ref{Thm:ImaginaryQuadraticVarietyI}.
Also the uniformity of $\tilde{I}_{n}^{2,i}$ is as good as that of
$I_{n}^{2,i}$
since
\[
\left\langle \Im(\tilde{I}_{n}^{2,i}) \right\rangle = \left\{1-\beta \mid \beta \in \left\langle \Im(I_{n}^{2,i}) \right\rangle \right\}
\]
holds.

\section{Properties of $I_{m,n}^{3,ntr}$}

In this section, we investigate properties of the sets
$I_{m,n}^{3,ntr}$ of real cubic algebraic integers that are not
totally real (see \eqref{eq:NotTotallyRealCubicDefinition} for the
definition of $I_{m,n}^{3,ntr}$).
As for the uniformity of $I_{m,n}^{3,ntr}$ in the unit interval,
we have the following result.

\begin{prop}[\cite{SaitoChaos2018}]\label{Prop:CubicUniformity}
Let $b$ be a fixed integer.
Then, the elements of $I_{b,c}^{3,ntr}$ are distributed almost uniformly
in the unit interval for sufficiently large $c$.
\end{prop}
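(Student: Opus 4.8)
The plan is to reparametrize the set so that its elements become the preimages, under an explicit strictly increasing cubic function, of consecutive integers, and then estimate consecutive gaps by the mean value theorem --- exactly as in the proof of \thmref{Thm:ImaginaryQuadraticUniformity}.

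First I would fix $b$ and take $c$ large enough that both defining inequalities $b^2-3c\le 0$ and $b+c\ge 1$ hold. Writing $f_d(X)=X^3+bX^2+cX+d$, note that $f_d'(X)=3X^2+2bX+c$ has discriminant $4(b^2-3c)\le 0$, so $f_d'\ge 0$ and $f_d$ is strictly increasing on $\mathbb{R}$; in particular each $f_d$ has exactly one real root. Since $f_d(0)=d\le -1<0$ and $f_d(1)=1+b+c+d\ge 1>0$ for every integer $d$ with $-(b+c)\le d\le -1$, that root lies in $(0,1)$. As $f_d-f_{d'}$ is a nonzero constant for $d\neq d'$, distinct $d$ give distinct roots, so $I_{b,c}^{3,ntr}$ has exactly $N:=b+c$ elements.

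Next I would introduce $g(x):=x^3+bx^2+cx$, which is strictly increasing on $[0,1]$ with $g(0)=0$ and $g(1)=N+1$, and observe that $\alpha\in(0,1)$ is a root of $f_d$ if and only if $g(\alpha)=-d$. Hence the elements of $I_{b,c}^{3,ntr}$, listed in increasing order, are precisely $\alpha_k:=g^{-1}(k)$ for $k=1,2,\dots,N$. Applying the mean value theorem to $g$ on $[\alpha_k,\alpha_{k+1}]$ for $k=1,\dots,N-1$ gives $\Delta_k=\alpha_{k+1}-\alpha_k=1/g'(\xi_k)$ for some $\xi_k\in(0,1)$, so that
\[
\Delta_k-\frac{1}{N}=\frac{(b+c)-g'(\xi_k)}{(b+c)\,g'(\xi_k)}=\frac{b(1-2\xi_k)-3\xi_k^2}{(b+c)\,g'(\xi_k)}.
\]
The numerator is bounded in absolute value by a constant depending only on $b$, while for $c$ large $g'(\xi_k)=c+2b\xi_k+3\xi_k^2$ and hence the denominator grow like $c^2$; therefore $\max_k\left|\Delta_k-1/N\right|=O(1/c^2)=O(1/N^2)$. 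Since $N=b+c\to\infty$ as $c\to\infty$, \eqref{eq:MaxOrderInequality} applies and $I_{b,c}^{3,ntr}$ is almost uniform for sufficiently large $c$.

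I do not expect any genuine obstacle here: the hypothesis $b^2-3c\le 0$ that characterises the not-totally-real case is exactly what forces $f_d$ (equivalently $g$) to be monotone, which is the whole point. The only care needed is bookkeeping --- verifying the element count $N=b+c$, checking that the $O(1)$ bounds above are uniform in both $k$ and $d$, and making ``sufficiently large $c$'' precise in terms of the constant hidden in \eqref{eq:MaxOrderInequality} --- but this is routine.
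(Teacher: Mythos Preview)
The paper does not supply its own proof of \propref{Prop:CubicUniformity}; it is stated with a citation to \cite{SaitoChaos2018} and left unproved in the present manuscript. Your argument is correct, and it is essentially the same mean-value-theorem computation that the paper carries out for the companion result \thmref{Thm:TotallyRealCubicUniformity}: there too one writes the elements as roots of $f_d(x)=x^3+bx^2+cx+d$ for consecutive integers $d$, invokes the mean value theorem to get $\Delta_d^{-1}=\pm f_d'(\beta)$ for some $\beta\in(0,1)$, bounds $f_d'$ above and below on $(0,1)$ by $c+O_b(1)$, and reads off $\lvert\Delta_d-1/N\rvert=O_b(1/N^2)$. Your reparametrisation via $g(x)=x^3+bx^2+cx$ and $\alpha_k=g^{-1}(k)$ is a slightly cleaner way to package the same computation, and your observation that the hypothesis $b^2-3c\le 0$ is exactly what forces monotonicity is the right structural point.
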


Moreover, we can easily see that similarly to $I_{n}^{2,r}$ ($n \ge
1$) and $I_{n}^{2,i}$ ($n \ge 1$) discussed in the previous sections,
the following holds for $I_{m,n}^{3,ntr}$ with $m \in
\left\{0,\,-1\right\}$, namely $I_{0,n}^{3,ntr}$ ($n \ge 1$) and
$I_{-1,n}^{3,ntr}$ ($n \ge 2$):
If $I_{0,n}^{3,ntr}$ (or $I_{-1,n}^{3,ntr}$) consists of even number of
elements, then the elements less than $1/2$ and those greater than
$1/2$ are equal in number.
If $I_{0,n}^{3,ntr}$ (or $I_{-1,n}^{3,ntr}$) consists of odd number of
elements, then the number of elements less than $1/2$ is exactly one
less than the number of elements greater than $1/2$.
From the fact that $\alpha$ is a root of $X^3+bX^2+cX+d$ if and only
if $1 - \alpha$ is a root of $X^3+(-b-3)X^2+(2b + c +3)X +(-b-c-d-1)$,
we obtain
\begin{align}\label{eq:RealCubicSymmetry}
I_{-b-3, 2b + c +3}^{3,ntr} &= \left\{1-\alpha \mid \alpha \in I_{b,c}^{3,ntr}\right\}.
\end{align}
Thus, we see that the elements of $I_{-2,n}^{3,ntr}$ ($n \ge 3$) (or
$I_{-3,n}^{3,ntr}$ ($n \ge 4$)) are also divided into the intervals
$(0,1/2)$ and $(1/2,1)$ as equally as possible (i.e., the number of
elements in $(0,1/2)$ is equal to or exactly one more than the number of
elements in $(1/2,1)$).
On the other hand, for $m$ other than $0$, $-1$, $-2$, and $-3$,
uniformity in this sense is inferior.
In fact, it is not difficult to see that even if $I_{m,n}^{3,ntr}$ with
$m \in {\mathbb Z} \setminus \left\{0, -1, -2, -3\right\}$ consists of
even number of elements, the elements less than $1/2$ and those
greater than $1/2$ are not equal in number (i.e., the difference
between the number of elements in $(0,1/2)$ and the number of elements
in $(1/2,1)$ is greater than or equal to two).
Thus, we will specialise to $I_{m,n}^{3,ntr}$ with $m \in \left\{0, -1,
-2, -3\right\}$ below.

We now show the arithmetical independence of
$I_{m,n}^{3,ntr}$ with $m \in \left\{0, -1, -2, -3\right\}$, which was
originally conjectured in \cite[Conjecture 1]{SaitoChaos2018}.

\begin{thm}\label{Thm:CubicNoncoincidence}
Let $b, c$ be integers satisfying $-3 \le b \le 0$ and $c \ge -b+1$.
Then, ${\mathbb Q}(\alpha) \neq {\mathbb Q}(\beta)$ holds for all
$\alpha, \beta \in I_{b,c}^{3,ntr}$ with $\alpha \neq \beta$.
\end{thm}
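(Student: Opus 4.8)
The plan is to mimic the norm-bound argument of Theorems~\ref{Thm:QuadraticNoncoincidence} and \ref{Thm:ImaginaryQuadraticNoncoincidence}, but now in a cubic field. Suppose $\alpha,\beta\in I_{b,c}^{3,ntr}$ are distinct with $K:={\mathbb Q}(\alpha)={\mathbb Q}(\beta)$; this is a cubic field, and because $\alpha,\beta$ each have two complex conjugates, $K$ is a real cubic field that is not totally real. First I would fix an embedding $K\hookrightarrow{\mathbb C}$ and write the three conjugates of $\alpha$ as $\alpha_1=\alpha\in(0,1)$ together with a complex conjugate pair $\alpha_2,\alpha_3=\overline{\alpha_2}$, and similarly $\beta_1=\beta\in(0,1)$, $\beta_2,\beta_3=\overline{\beta_2}$. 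The element $\gamma:=\alpha-\beta\in K$ is a nonzero algebraic integer, so its field norm $N_{K/{\mathbb Q}}(\gamma)=\prod_{i=1}^{3}(\alpha_i-\beta_i)$ is a nonzero rational integer; hence $\bigl|\prod_{i=1}^{3}(\alpha_i-\beta_i)\bigr|\ge 1$. The goal is to contradict this by bounding each factor.

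The first factor is easy: $|\alpha_1-\beta_1|<1$ since both lie in $(0,1)$. The crux is to bound $|\alpha_2-\beta_2|\cdot|\alpha_3-\beta_3|=|\alpha_2-\beta_2|^2$, i.e.\ to show the complex conjugates of $\alpha$ and $\beta$ are close. Here I would use that $\alpha_2+\alpha_3=2\Re(\alpha_2)$ and $\alpha_2\alpha_3=|\alpha_2|^2$ are determined by the coefficients: from $\alpha_1+\alpha_2+\alpha_3=-b$ and $\alpha_1\alpha_2\alpha_3=-d$ we get $2\Re(\alpha_2)=-b-\alpha_1$ and $|\alpha_2|^2=\alpha_1\alpha_2\alpha_3/\alpha_1$, wait more cleanly $|\alpha_2|^2=(\alpha_2\alpha_3)=(-d)/\alpha_1$ is not integral, so instead use $e_2=\alpha_1\alpha_2+\alpha_1\alpha_3+\alpha_2\alpha_3=c$, giving $|\alpha_2|^2=\alpha_2\alpha_3=c-\alpha_1(\alpha_2+\alpha_3)=c+b\alpha_1+\alpha_1^2$. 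Thus $|\alpha_2|^2=\alpha_1^2+b\alpha_1+c$ and likewise $|\beta_2|^2=\beta_1^2+b\beta_1+c$; since $\alpha_1,\beta_1\in(0,1)$ and $-3\le b\le 0$, $c\ge -b+1$, these quantities lie in an explicit bounded range, and moreover
\[
\bigl|\,|\alpha_2|^2-|\beta_2|^2\,\bigr|=|\alpha_1-\beta_1|\cdot|\alpha_1+\beta_1+b|\le |\alpha_1-\beta_1|\cdot\max(2,|b|),
\]
so $|\alpha_2|$ and $|\beta_2|$ are close; similarly $\Re(\alpha_2)-\Re(\beta_2)=-(\alpha_1-\beta_1)/2$ is small. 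The remaining task is to turn "close modulus, close real part" into a genuine upper bound on $|\alpha_2-\beta_2|$. Writing $\alpha_2-\beta_2=(\Re\alpha_2-\Re\beta_2)+i(\Im\alpha_2-\Im\beta_2)$, the real part is $\tfrac12|\alpha_1-\beta_1|<\tfrac12$, and for the imaginary part one uses $\Im(\alpha_2)^2=|\alpha_2|^2-\Re(\alpha_2)^2$ together with the two closeness estimates and the fact that $|\Im(\alpha_2)|,|\Im(\beta_2)|$ are bounded below away from $0$ (which is exactly the non-totally-real condition, valid on this range of $(b,c)$); a mean-value-theorem step on $t\mapsto\sqrt{t}$ then gives $|\Im(\alpha_2)-\Im(\beta_2)|=O(|\alpha_1-\beta_1|)$ with an explicit constant. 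Combining, $|\alpha_2-\beta_2|^2< 1$ with room to spare, hence $\prod_{i}|\alpha_i-\beta_i|<1$, the desired contradiction; therefore $N_{K/{\mathbb Q}}(\gamma)=0$ and $\alpha=\beta$.

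The main obstacle is the last step: unlike the quadratic cases, the "small factor" here is a product of a complex conjugate pair, and controlling $|\alpha_2-\beta_2|$ requires controlling both the imaginary part and a lower bound $|\Im(\alpha_2)|\ge\delta>0$ uniformly for all $(b,c)$ in the allowed range. I expect this forces a genuine case analysis over $b\in\{0,-1,-2,-3\}$, checking that in each case the discriminant-type quantity $\Re(\alpha_2)^2-|\alpha_2|^2<0$ stays bounded away from $0$ as $\alpha_1$ ranges over the relevant subinterval of $(0,1)$, and then bookkeeping the explicit constants so that the product of the three bounds is provably $<1$. If a crude global bound is not quite enough, a fallback is to note that $\alpha_1-\beta_1$ is itself not merely $<1$ but, since $\alpha_1,\beta_1$ are roots of distinct cubics with the same $b,c$ differing only in $d$, their separation is controlled — but I would first try to push through the direct estimate, splitting into the four values of $b$ only if the uniform constant is too weak.
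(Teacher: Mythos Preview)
Your argument has a genuine gap: it handles only half of the cases. When you fix the three embeddings $\sigma_1,\sigma_2,\sigma_3$ of $K$ (with $\sigma_3=\overline{\sigma_2}$) and set $\alpha_i=\sigma_i(\alpha)$, $\beta_i=\sigma_i(\beta)$, the number $\beta_2=\sigma_2(\beta)$ is a specific complex number, and nothing forces $\Im(\beta_2)$ to have the same sign as $\Im(\alpha_2)$. Your mean-value step on $t\mapsto\sqrt t$ correctly shows that $\lvert\Im(\alpha_2)\rvert$ and $\lvert\Im(\beta_2)\rvert$ are close, but if their signs differ then $\lvert\Im(\alpha_2)-\Im(\beta_2)\rvert\approx 2\sqrt{c}$, so $\lvert\alpha_2-\beta_2\rvert$ is large and $\lvert N_{K/{\mathbb Q}}(\alpha-\beta)\rvert$ is of order $c$, not $<1$. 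The norm-of-$\alpha-\beta$ argument gives no contradiction in this case.

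This ``opposite-sign'' situation is exactly what the paper calls case~(M), and it is the substantive half of the proof. There one switches to $\alpha+\beta$: now $\lvert\alpha_2+\beta_2\rvert$ is small (roughly $1+O(c^{-1/2})$), but since $\alpha_1+\beta_1\in(0,2)$ one only obtains $\lvert N_{K/{\mathbb Q}}(\alpha+\beta)\rvert<3$, not $<1$, so the norm alone does not finish. The paper then shows that $\alpha_1+\beta_1$ is genuinely cubic (a trace argument rules out rationality), bounds all three elementary symmetric functions of $\alpha_i+\beta_i$, enumerates the finitely many candidate minimal polynomials $x^3-2bx^2+px-q$ in the resulting box, and checks numerically that none of them has a nonreal root with imaginary part as small as $\alpha_2+\beta_2$ must have. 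This step uses tables of roots and also the assumption $c\ge 10$; the remaining small values of $c$ are verified by direct computation in \cite{SaitoChaos2018}. Your proposal, as written, contains no mechanism for this half of the argument; the ``same-sign'' half (case~(P)) is indeed handled by your norm bound, and there your outline matches the paper.
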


\begin{proof}
Assume that $\alpha, \beta \in I_{b,c}^{3,ntr}$ with $\alpha \neq
\beta$ satisfy ${\mathbb Q}(\alpha) = {\mathbb Q}(\beta)$.
Let $\alpha_i$ ($i \in \{1,2,3\}$) (resp. $\beta_i$ ($i \in \{1,2,3\}$))
be the conjugates of $\alpha$ (resp. $\beta$).
We may assume that $\alpha_1 = \alpha$, $\beta_1 = \beta$, and
${\mathbb Q}(\alpha_i)={\mathbb Q}(\beta_i)$ ($i \in \{1,2,3\}$).
Note that $\alpha_i$ and $\beta_i$ with $i > 1$ are imaginary numbers,
and that $\alpha_3$ (resp. $\beta_3$) is the complex conjugate of
$\alpha_2$ (resp. $\beta_2$).
We assume that $\alpha_2$ is in the upper half of the complex plane
($\beta_2$ can be in the upper or lower half plane).

We divide the proof into four cases:
(i) $b=0$, $c \ge 1$,
(ii) $b=-1$, $c \ge 2$,
(iii) $b=-2$, $c \ge 3$,
(iv) $b=-3$, $c \ge 4$.

\noindent
{\bf Case (i):}
For calculational convenience, we temporarily
assume that $c \ge 10$.
By the relation between roots and coefficients, we have
\begin{align*}
\alpha_1 + \alpha_2 + \alpha_3 = 0,\qquad
\alpha_1 \alpha_2 + \alpha_2 \alpha_3+ \alpha_3 \alpha_1 = c.
\end{align*}
Thus, we have
\begin{align*}
  \alpha_1 (\alpha_2 + \alpha_3) + \alpha_2 \alpha_3 = c,\qquad
  \alpha_2 \alpha_3 = c + \alpha_1^2,\qquad
  \lvert \alpha_2 \rvert (=\lvert \alpha_3 \rvert) = \sqrt{c + \alpha_1^2}.
\end{align*}
Therefore, we obtain
$
\alpha_2 = \displaystyle -\frac{\alpha_1}{2} + \sqrt{c + \frac{3 \alpha_1^2}{4}} i,
$
where we denote for simplicity $\sqrt{-1}$ by $i$.
In a similar manner, we have (P)
$
\beta_2 = \displaystyle -\frac{\beta_1}{2} + \sqrt{c + \frac{3 \beta_1^2}{4}} i
$ or (M)
$
\beta_2 = \displaystyle -\frac{\beta_1}{2} - \sqrt{c + \frac{3 \beta_1^2}{4}} i
$.

First, assume (P).
Then we have
\[
\alpha_2 - \beta_2 = -\frac{\alpha_1 - \beta_1}{2} + \left( \sqrt{c + \frac{3 \alpha_1^2}{4}} - \sqrt{c + \frac{3 \beta_1^2}{4}} \right) i,
\]
which means that
\begin{align*}
  \lvert \alpha_2 - \beta_2 \rvert &\le \frac{\lvert \alpha_1 - \beta_1 \rvert}{2} + \left\lvert \sqrt{c + \frac{3 \alpha_1^2}{4}} - \sqrt{c + \frac{3 \beta_1^2}{4}} \right\rvert\\
&= \frac{\lvert \alpha_1 - \beta_1 \rvert}{2} +
  \left\lvert \frac{\displaystyle\frac{3 (\alpha_1^2 - \beta_1^2)}{4}}{\displaystyle\sqrt{c + \frac{3 \alpha_1^2}{4}} + \sqrt{c + \frac{3 \beta_1^2}{4}}} \right\rvert\\
&< \displaystyle\frac{1}{2} +   \frac{\displaystyle\frac{3}{4}}{2 \displaystyle\sqrt{c}} \le \frac{1}{2} +   \frac{3}{8 \displaystyle\sqrt{10}} < 1.
\end{align*}  
Since $\alpha_3 - \beta_3 = \overline{\alpha_2 - \beta_2}$, we also have
$\lvert \alpha_3 - \beta_3 \rvert < 1$.
Thus, we see that $\lvert (\alpha_1 - \beta_1)(\alpha_2 - \beta_2)(\alpha_3 - \beta_3) \rvert < 1$.
Since $(\alpha_1 - \beta_1)(\alpha_2 - \beta_2)(\alpha_3 - \beta_3)
\in {\mathbb Z}$, we have $(\alpha_1 - \beta_1)(\alpha_2 -
\beta_2)(\alpha_3 - \beta_3) =0$, which implies $\alpha_1 = \beta_1$.
This contradicts $\alpha \neq \beta$.

Second, assume (M).
Then we have
\[
\alpha_2 + \beta_2 = -\frac{\alpha_1 + \beta_1}{2} + \left( \sqrt{c + \frac{3 \alpha_1^2}{4}} - \sqrt{c + \frac{3 \beta_1^2}{4}} \right) i,
\]
which means that
\begin{align*}
  \lvert \alpha_2 + \beta_2 \rvert &\le  \frac{\left\lvert\alpha_1 + \beta_1\right\rvert}{2}  + \left\lvert \sqrt{c + \frac{3 \alpha_1^2}{4}} - \sqrt{c + \frac{3 \beta_1^2}{4}} \right\rvert\\
  & < 1 +   \frac{\displaystyle\frac{3}{4}}{2 \displaystyle\sqrt{c}} \le 1 +   \frac{3}{8 \displaystyle\sqrt{10}} < 1.1186.
\end{align*}  
We also have
$\lvert \alpha_3 + \beta_3 \rvert < 1.1186$.
Since $\lvert \alpha_1 + \beta_1 \rvert < 2$, we see that $\lvert
(\alpha_1 + \beta_1)(\alpha_2 + \beta_2)(\alpha_3 + \beta_3) \rvert <
2 \times (1.1186)^2 =2.50253192$.
Since $(\alpha_1 + \beta_1)(\alpha_2 + \beta_2)(\alpha_3 + \beta_3)
\in {\mathbb Z}$, we see that $(\alpha_1 + \beta_1)(\alpha_2 +
\beta_2)(\alpha_3 + \beta_3)$ is equal to either 0, 1, or 2.
Assuming $(\alpha_1 + \beta_1)(\alpha_2 + \beta_2)(\alpha_3 + \beta_3)=0$,
we have $\alpha_1 + \beta_1=0$, which is a contradiction.
Thus, $(\alpha_1 + \beta_1)(\alpha_2 + \beta_2)(\alpha_3 + \beta_3)$
is equal to 1 or 2.
We now show that $\alpha_1 + \beta_1 \notin {\mathbb Q}$ (i.e.,
$\alpha_1 + \beta_1 \notin {\mathbb Z}$).
Assuming $r:=\alpha_1 + \beta_1 \in {\mathbb Q}$, we have $3 r=
(\alpha_1 + \beta_1)+(\alpha_2 + \beta_2)+(\alpha_3 + \beta_3)=0$,
which contradicts $r \neq 0$.
Thus, $\alpha_1 + \beta_1$ is a cubic algebraic integer.
We have
\begin{align*}
&\lvert (\alpha_1 + \beta_1)(\alpha_2 + \beta_2)+ (\alpha_2 + \beta_2)(\alpha_3 + \beta_3)+ (\alpha_3 + \beta_3)(\alpha_1 + \beta_1) \rvert\\
& \le \lvert \alpha_1 + \beta_1 \rvert \lvert \alpha_2 + \beta_2 \rvert+ \lvert \alpha_2 + \beta_2 \rvert \lvert \alpha_3 + \beta_3 \rvert+ \lvert \alpha_3 + \beta_3 \rvert \lvert \alpha_1 + \beta_1 \rvert\\
  & < 2 \times 1.1186 + (1.1186)^2 + 1.1186 \times 2\\
  & = 5.72566596 <6.
\end{align*}
Therefore, the minimal polynomial of $\alpha_1 + \beta_1$, denoted by
$x^3 + p x - q$, satisfies $\lvert p \rvert \le 5$ and $q \in \{1,2\}$.
We numerically found the roots of $x^3 + p x - q$ which is
irreducible and whose discriminant is negative, and we show the
results in Table~\ref{tab:b=0}.
\begin{table}
\caption{\label{tab:b=0}
Numerical roots of irreducible polynomials $x^3 + p x - q$ with $\lvert p \rvert \le 5$, $q \in \{1,2\}$, and negative discriminant}
%\begin{ruledtabular}
\begin{center}
\begin{tabular}{lrr}
Polynomial &
Real root &
Imaginary roots\\
\hline
$x^3-2 x-2$ & $1.76929$ & $-0.88465 \pm 0.58974 i$\\
$x^3-x-1$   & $1.32472$ & $-0.66236 \pm 0.56228 i$\\
$x^3-x-2$   & $1.52138$ & $-0.76069 \pm 0.85787 i$\\
$x^3-2$     & $1.25992$ & $-0.62996 \pm 1.09112 i$\\
$x^3+x-1$   & $0.68233$ & $-0.34116 \pm 1.16154 i$\\
$x^3+2 x-1$ & $0.45340$ & $-0.22670 \pm 1.46771 i$\\
$x^3+2 x-2$ & $0.77092$ & $-0.38546 \pm 1.56388 i$\\
$x^3+3 x-1$ & $0.32219$ & $-0.16109 \pm 1.75438 i$\\
$x^3+3 x-2$ & $0.59607$ & $-0.29804 \pm 1.80734 i$\\
$x^3+4 x-1$ & $0.24627$ & $-0.12313 \pm 2.01134 i$\\
$x^3+4 x-2$ & $0.47347$ & $-0.23673 \pm 2.04160 i$\\
$x^3+5 x-1$ & $0.19844$ & $-0.09922 \pm 2.24266 i$\\
$x^3+5 x-2$ & $0.38829$ & $-0.19415 \pm 2.26121 i$
\end{tabular}
\end{center}
%\end{ruledtabular}
\end{table}
We can see from the above argument that
the absolute value of the imaginary part of
$\alpha_2 + \beta_2$ is less than $0.1186$, but there is no polynomial
having such an imaginary root in Table~\ref{tab:b=0}.
So again we arrive at a contradiction.

Consequently, we have proved the assertion for $c \ge 10$ in Case (i).
The assertion for $1 \le c < 10$ has already been
confirmed by direct computation in \cite{SaitoChaos2018}.

\noindent
{\bf Case (ii):}
As in the proof of Case (i), we temporarily assume that $c \ge 10$.
Using
\begin{align*}
\alpha_1 + \alpha_2 + \alpha_3 = 1,\qquad
\alpha_1 \alpha_2 + \alpha_2 \alpha_3+ \alpha_3 \alpha_1 = c,
\end{align*}
we have
\begin{align*}
  \alpha_1 (\alpha_2 + \alpha_3) + \alpha_2 \alpha_3 = c,\qquad
  \alpha_2 \alpha_3 = c - \alpha_1 (1 - \alpha_1),\qquad
  \lvert \alpha_2 \rvert (=\lvert \alpha_3 \rvert) = \sqrt{c - \alpha_1 (1 - \alpha_1)},
\end{align*}
and we obtain
$
\alpha_2 = \displaystyle\frac{1 - \alpha_1}{2} + \sqrt{c + \frac{3 \alpha_1^2 - 2\alpha_1 -1}{4}} i.
$
Similarly, we have (P)
$
\beta_2 = \displaystyle\frac{1 - \beta_1}{2} + \sqrt{c + \frac{3 \beta_1^2 - 2\beta_1 -1}{4}} i
$ or (M)
$
\beta_2 = \displaystyle\frac{1 - \beta_1}{2} - \sqrt{c + \frac{3 \beta_1^2 - 2\beta_1 -1}{4}} i
$.

First, assume (P).
Then we have
\[
\alpha_2 - \beta_2 = -\frac{\alpha_1 - \beta_1}{2} + \left( \sqrt{c + \frac{3 \alpha_1^2 - 2\alpha_1 -1}{4}} - \sqrt{c + \frac{3 \beta_1^2 - 2\beta_1 -1}{4}} \right) i,
\]
which means that
\begin{align*}
  \lvert \alpha_2 - \beta_2 \rvert &\le \frac{\lvert \alpha_1 - \beta_1 \rvert}{2} + \left\lvert \sqrt{c + \frac{3 \alpha_1^2 - 2\alpha_1 -1}{4}} - \sqrt{c + \frac{3 \beta_1^2 - 2\beta_1 -1}{4}} \right\rvert\\
    &= \frac{\lvert \alpha_1 - \beta_1 \rvert}{2} +
  \left\lvert \frac{\displaystyle\frac{(\alpha_1 - \beta_1) \left\{3 (\alpha_1 + \beta_1) -2\right\} }{4}}{\displaystyle\sqrt{c + \frac{3 \alpha_1^2 - 2\alpha_1 -1}{4}} + \sqrt{c + \frac{3 \beta_1^2 - 2\beta_1 -1}{4}}} \right\rvert.
\end{align*}  
Since $3 x^2 - 2x -1 = 3 (x - 1/3)^2 - 4/3$, we have
\[
\lvert \alpha_2 - \beta_2 \rvert < \frac{1}{2} +   \frac{1}{2 \displaystyle\sqrt{c - \frac{1}{3}}} \le \frac{1}{2} +   \frac{1}{2 \displaystyle\sqrt{\frac{29}{3}}} < 1.
\]
We also have
$\lvert \alpha_3 - \beta_3 \rvert < 1$.
Thus, we see that $\lvert (\alpha_1 - \beta_1)(\alpha_2 - \beta_2)(\alpha_3 - \beta_3) \rvert < 1$,
and we have $(\alpha_1 - \beta_1)(\alpha_2 -
\beta_2)(\alpha_3 - \beta_3) =0$.
This contradicts $\alpha \neq \beta$.

Second, assume (M).
Then we have
\[
\alpha_2 + \beta_2 = 1-\frac{\alpha_1 + \beta_1}{2} + \left( \sqrt{c + \frac{3 \alpha_1^2 - 2\alpha_1 -1}{4}} - \sqrt{c + \frac{3 \beta_1^2 - 2\beta_1 -1}{4}} \right) i,
\]
which means that
\begin{align*}
  \lvert \alpha_2 + \beta_2 \rvert &\le \left\lvert 1-\frac{\alpha_1 + \beta_1}{2} \right\rvert + \left\lvert \sqrt{c + \frac{3 \alpha_1^2 - 2\alpha_1 -1}{4}} - \sqrt{c + \frac{3 \beta_1^2 - 2\beta_1 -1}{4}} \right\rvert\\
  & < 1 +   \frac{1}{2 \displaystyle\sqrt{c - \frac{1}{3}}} \le 1 +   \frac{1}{2 \displaystyle\sqrt{\frac{29}{3}}} < 1.1609.
\end{align*}  
We also have
$\lvert \alpha_3 + \beta_3 \rvert < 1.1609$.
Thus, $\lvert
(\alpha_1 + \beta_1)(\alpha_2 + \beta_2)(\alpha_3 + \beta_3) \rvert <
2 \times (1.1609)^2 =2.69537762$.
Since $\alpha_1 + \beta_1>0$, we have $(\alpha_1 + \beta_1)(\alpha_2 +
\beta_2)(\alpha_3 + \beta_3) > 0$.
Thus, $(\alpha_1 + \beta_1)(\alpha_2 + \beta_2)(\alpha_3 + \beta_3)$
is equal to 1 or 2.
We now show that $\alpha_1 + \beta_1 \notin {\mathbb Q}$, namely
$\alpha_1 + \beta_1 \notin {\mathbb Z}$.
Assuming $z:=\alpha_1 + \beta_1 \in {\mathbb Z}$, we have $3 z=
(\alpha_1 + \beta_1)+(\alpha_2 + \beta_2)+(\alpha_3 + \beta_3)=2$,
which gives the contradiction $z=2/3$.
Thus, $\alpha_1 + \beta_1$ is a cubic algebraic integer.
We have
\begin{align*}
&\lvert (\alpha_1 + \beta_1)(\alpha_2 + \beta_2)+ (\alpha_2 + \beta_2)(\alpha_3 + \beta_3)+ (\alpha_3 + \beta_3)(\alpha_1 + \beta_1) \rvert\\
& \le \lvert \alpha_1 + \beta_1 \rvert \lvert \alpha_2 + \beta_2 \rvert+ \lvert \alpha_2 + \beta_2 \rvert \lvert \alpha_3 + \beta_3 \rvert+ \lvert \alpha_3 + \beta_3 \rvert \lvert \alpha_1 + \beta_1 \rvert\\
  & < 2 \times 1.1609 + (1.1609)^2 + 1.1609 \times 2\\
  & = 5.99128881 <6.
\end{align*}
Therefore, the minimal polynomial of $\alpha_1 + \beta_1$, denoted by
$x^3 -2 x^2 + p x - q$, satisfies $\lvert p \rvert \le 5$ and $q \in \{1,2\}$.
Table~\ref{tab:b=-1} shows numerical roots of $x^3 -2 x^2 + p x - q$
which is irreducible and whose discriminant is negative.
\begin{table}
\caption{\label{tab:b=-1}
Numerical roots of irreducible polynomials $x^3 -2 x^2 + p x - q$ with $\lvert p \rvert \le 5$, $q \in \{1,2\}$, and negative discriminant}
%\begin{ruledtabular}
\begin{center}
\begin{tabular}{lrr}
Polynomial &
Real root &
Imaginary roots\\
\hline
$x^3-2 x^2-4 x-2$ & $3.36523$ & $-0.68262 \pm 0.35826 i$\\
$x^3-2 x^2-3 x-1$ & $3.07960$ & $-0.53980 \pm 0.18258 i$\\
$x^3-2 x^2-3 x-2$ & $3.15276$ & $-0.57638 \pm 0.54968 i$\\
$x^3-2 x^2-2 x-1$ & $2.83118$ & $-0.41559 \pm 0.42485 i$\\
$x^3-2 x^2-2 x-2$ & $2.91964$ & $-0.45982 \pm 0.68817 i$\\
$x^3-2 x^2-x-1$   & $2.54682$ & $-0.27341 \pm 0.56382 i$\\
$x^3-2 x^2-x-2$   & $2.65897$ & $-0.32948 \pm 0.80225 i$\\
$x^3-2 x^2-1$     & $2.20557$ & $-0.10278 \pm 0.66546 i$\\
$x^3-2 x^2-2$     & $2.35930$ & $-0.17965 \pm 0.90301 i$\\
$x^3-2 x^2+x-1$   & $1.75488$ &  $0.12256 \pm 0.74486 i$\\
$x^3-2 x^2+2 x-2$ & $1.54369$ &  $0.22816 \pm 1.11514 i$\\
$x^3-2 x^2+3 x-1$ & $0.43016$ &  $0.78492 \pm 1.30714 i$\\
$x^3-2 x^2+4 x-1$ & $0.28477$ &  $0.85761 \pm 1.66615 i$\\
$x^3-2 x^2+4 x-2$ & $0.63890$ &  $0.68055 \pm 1.63317 i$\\
$x^3-2 x^2+5 x-1$ & $0.21676$ &  $0.89162 \pm 1.95409 i$\\
$x^3-2 x^2+5 x-2$ & $0.46682$ &  $0.76659 \pm 1.92266 i$
\end{tabular}
\end{center}
%\end{ruledtabular}
\end{table}
From the above argument,
the absolute value of the imaginary part of
$\alpha_2 + \beta_2$ is less than $0.1609$, but there is no
such imaginary root in Table~\ref{tab:b=-1}.
So again we arrive at a contradiction.

Consequently, we have proved the assertion for $c \ge 10$ in Case (ii).
The assertion for $2 \le c < 10$ has already been
confirmed in \cite{SaitoChaos2018}.

By \eqref{eq:RealCubicSymmetry}, proofs for Cases (iii) and (iv)
follow from Cases (ii) and (i), respectively.
\end{proof}

Note that in contrast to the one-parameter families
$\left\{I_{n}^{2,r}\right\}$ and $\left\{I_{n}^{2,i}\right\}$
discussed in the previous sections, the two-parameter family $\left\{
I_{m,n}^{3,ntr} \right\}$ with $m$ unrestricted includes a set which has
more than one element belonging to an identical field.
For example, the two elements $\sqrt[3]{2}-1$ and $\sqrt[3]{2^2}-1$ of
$I_{3,3}^{3,ntr}$ belong to the same cubic field ${\mathbb
Q}(\sqrt[3]{2})$.

We now move on to consider to what extent some unions of
$I_{m,n}^{3,ntr}$ can cover the cubic fields.
For each integer $m$ in $\left\{0, -1, -2, -3\right\}$, we define the
set $S_{m}^{3,ntr}$ by
\[
S_{m}^{3,ntr}:=\bigcup_{n \ge -m+1} I_{m,n}^{3,ntr}.
\]
For the cubic sets $S_{m}^{3,ntr}$ with $m=0$ and $-3$ (i.e.,
$S_{0}^{3,ntr}$ and $S_{-3}^{3,ntr}$), we prove a result weaker than
Theorems \ref{Thm:RealQuadraticCover},
\ref{Thm:RealQuadraticVarietyI}, \ref{Thm:ImaginaryQuadraticCover},
and \ref{Thm:ImaginaryQuadraticVarietyI} for the quadratic
counterparts.
That is, we just show their full generation with respect to real cubic
fields which are not totally real.

\begin{thm}\label{Thm:RealCubicCover}
The set $S_{0}^{3,ntr}$ includes a primitive element of any real cubic field
which is not totally real.
The same is true of $S_{-3}^{3,ntr}$.
\end{thm}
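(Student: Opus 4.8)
The plan is to reduce everything to the case of $S_0^{3,ntr}$. Putting $b=0$ in \eqref{eq:RealCubicSymmetry} gives $I_{-3,\,c+3}^{3,ntr}=\{1-\alpha\mid\alpha\in I_{0,c}^{3,ntr}\}$ for every $c\ge1$, hence $S_{-3}^{3,ntr}=\{1-\alpha\mid\alpha\in S_0^{3,ntr}\}$; since $\mathbb{Q}(1-\alpha)=\mathbb{Q}(\alpha)$ and the property ``real cubic, not totally real'' is unaffected by $\alpha\mapsto1-\alpha$, the two assertions are equivalent, and it suffices to treat $S_0^{3,ntr}$. So fix a real cubic field $K$ that is not totally real, regarded as a subfield of $\mathbb{R}$, with ring of integers $\mathcal{O}_K$, and let $\sigma\colon K\hookrightarrow\mathbb{C}$ be one of its two (non-real, mutually conjugate) complex embeddings. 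I claim it is enough to produce an algebraic integer $\gamma\in\mathcal{O}_K\cap(0,1)$ with $\operatorname{Tr}_{K/\mathbb{Q}}\gamma=0$ and with positive second elementary symmetric function of conjugates $e_2(\gamma)$. Indeed, such a $\gamma$ is nonzero; if $\mathbb{Q}(\gamma)\ne K$ then $\mathbb{Q}(\gamma)=\mathbb{Q}$ (as $3$ is prime), forcing $\gamma\in\mathbb{Z}\setminus\{0\}$ and $0=\operatorname{Tr}_{K/\mathbb{Q}}\gamma=3\gamma$, absurd; so $\gamma$ generates $K$ and its minimal polynomial equals its characteristic polynomial $X^3+e_2(\gamma)X-\operatorname{Nm}(\gamma)$. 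Here $e_2(\gamma)\in\mathbb{Z}_{>0}$ by hypothesis, $\gamma>0$ gives $\operatorname{Nm}(\gamma)=\gamma\,\lvert\sigma(\gamma)\rvert^2\in\mathbb{Z}_{>0}$, and $\gamma<1$ together with strict monotonicity of $X^3+e_2(\gamma)X-\operatorname{Nm}(\gamma)$ (its derivative is $3X^2+e_2(\gamma)>0$) gives $1+e_2(\gamma)-\operatorname{Nm}(\gamma)>0$, i.e.\ $\operatorname{Nm}(\gamma)\le e_2(\gamma)$. Hence the bounds $-(m+n)\le d\le-1$ in \eqref{eq:NotTotallyRealCubicDefinition} hold with $m=0$, $n=e_2(\gamma)$, $d=-\operatorname{Nm}(\gamma)$, so $\gamma\in I_{0,e_2(\gamma)}^{3,ntr}\subseteq S_0^{3,ntr}$ with $\mathbb{Q}(\gamma)=K$, as wanted.

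To build such a $\gamma$ I would use the abelian group $L:=\{\gamma\in\mathcal{O}_K\mid\operatorname{Tr}_{K/\mathbb{Q}}\gamma=0\}$, which has rank $2$, being the kernel of the nonzero homomorphism $\operatorname{Tr}_{K/\mathbb{Q}}\colon\mathcal{O}_K\to\mathbb{Z}$. Since $K$ is not totally real, for $\gamma\in K$ the condition $\operatorname{Tr}\gamma=0$ forces $\operatorname{Re}\sigma(\gamma)=-\gamma/2$, so that $e_2(\gamma)=(\operatorname{Im}\sigma(\gamma))^2-\tfrac34\gamma^2$. Consider the map $\psi\colon L\to\mathbb{R}^2$, $\psi(\gamma)=(\gamma,\operatorname{Im}\sigma(\gamma))$. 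Restricting the Minkowski embedding $\mathcal{O}_K\hookrightarrow\mathbb{R}^3$, $\gamma\mapsto(\gamma,\operatorname{Re}\sigma(\gamma),\operatorname{Im}\sigma(\gamma))$, to $L$ yields a discrete rank-$2$ subgroup lying in the $2$-plane $\{x+2y=0\}$; projecting that plane isomorphically onto $\mathbb{R}^2$ by forgetting the middle coordinate shows $\psi(L)$ is a full lattice in $\mathbb{R}^2$. Note that $\psi$ is injective and $\psi(L)\cap\{x=0\}=\{0\}$, since the first coordinate of $\psi(\gamma)$ is $\gamma$ itself.

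Finally I run a Kronecker argument, as in the proof of \thmref{Thm:RealQuadraticVarietyI}. Fix a $\mathbb{Z}$-basis $v_1,v_2$ of $L$ and write $\psi(v_i)=(a_i,c_i)$; then $a_1,a_2$ are $\mathbb{Q}$-linearly independent (otherwise $pa_1+qa_2=0$ for some $(p,q)\in\mathbb{Z}^2\setminus\{0\}$, so $\psi(pv_1+qv_2)\in\{x=0\}$, whence $pv_1+qv_2=0$, contradicting that $v_1,v_2$ form a basis) and $a_1c_2-a_2c_1\ne0$ since $\psi(L)$ is full. By Kronecker's theorem \cite{Hardy} there are infinitely many $(p,q)\in\mathbb{Z}^2$ with $pa_1+qa_2\in(0,1)$; as $(p,q)\mapsto(pa_1+qa_2,\,pc_1+qc_2)$ is a linear isomorphism, any bounded set of values has only finitely many integer preimages, so $\lvert pc_1+qc_2\rvert$ is unbounded along this infinite family, and we may choose $(p,q)$ with $pa_1+qa_2\in(0,1)$ and $\lvert pc_1+qc_2\rvert\ge1$. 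Then $\gamma:=pv_1+qv_2\in L$ satisfies $\gamma\in(0,1)$ and $e_2(\gamma)=(pc_1+qc_2)^2-\tfrac34\gamma^2>1-\tfrac34>0$, so $\gamma\in S_0^{3,ntr}$ with $\mathbb{Q}(\gamma)=K$ by the first paragraph, which proves the theorem. I expect the only genuinely delicate point to be the bookkeeping of the first paragraph --- verifying that the explicit inequalities defining $I_{0,n}^{3,ntr}$ are automatic for a trace-zero $\gamma\in(0,1)$ with $e_2(\gamma)>0$; the rest is a standard lattice-and-Kronecker argument, whose one subtlety is keeping $\gamma$ inside $(0,1)$ while making $\lvert\operatorname{Im}\sigma(\gamma)\rvert$ large enough to force $e_2(\gamma)>0$.
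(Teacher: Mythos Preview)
Your proof is correct, and the overall architecture---reduce to $S_0^{3,ntr}$ via \eqref{eq:RealCubicSymmetry}, then produce a trace-zero algebraic integer $\gamma\in\mathcal{O}_K\cap(0,1)$ with $e_2(\gamma)>0$ and verify it lands in some $I_{0,c}^{3,ntr}$---matches the paper's. The difference is in how $\gamma$ is manufactured. The paper applies Minkowski's theorem directly to the full lattice $\phi(\mathcal{O}_K)\subset\mathbb{R}\times\mathbb{C}$, using the convex body $\{\lvert u\rvert,\lvert\Re\rvert\le\tfrac14,\ \lvert\Im\rvert\le\lambda\}$; the simultaneous bounds $\lvert\alpha_1\rvert,\lvert\Re\alpha_2\rvert\le\tfrac14$ force the trace to vanish, and $\lvert\mathrm{Nm}(\alpha)\rvert\ge1$ together with $\lvert\alpha_1\rvert\le\tfrac14$ gives $\alpha_2\alpha_3\ge4$, hence $c>0$. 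You instead pass first to the rank-$2$ trace-zero sublattice $L$, embed it as a full lattice in $\mathbb{R}^2$ via $\gamma\mapsto(\gamma,\Im\sigma(\gamma))$, and combine two elementary facts: (i) a non-cyclic subgroup of $\mathbb{R}$ is dense, giving infinitely many $(p,q)$ with $pa_1+qa_2\in(0,1)$; and (ii) a full lattice meets any bounded region in finitely many points, so $\lvert pc_1+qc_2\rvert$ must be unbounded along that family. Your route avoids Minkowski's theorem entirely, at the mild cost of a two-stage argument; the paper's route is a single geometry-of-numbers shot, at the cost of a slightly less transparent convex body. One cosmetic remark: what you call ``Kronecker's theorem'' here is really just the density of $\mathbb{Z}a_1+\mathbb{Z}a_2$ in $\mathbb{R}$ when $a_1/a_2\notin\mathbb{Q}$, which is weaker than the version invoked in \thmref{Thm:RealQuadraticVarietyI}.
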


\begin{proof}
By \eqref{eq:RealCubicSymmetry}, it suffices to prove the statement
for $S_{0}^{3,ntr}$.
Let $K$ be a real cubic field with two complex embeddings.
Let $O_K$ be the ring of algebraic integers of $K$.
We denote by $\sigma_i$, $i=1,2,3$, the three embeddings of $K$ into
${\mathbb C}$, where we may assume that $\sigma_1$ is the identity
map on $K$.
Let $\phi$ be the embedding of $K$ into ${\mathbb R} \times {\mathbb
  C}$ defined by $\phi(\alpha) := \left( \sigma_1(\alpha),
\sigma_2(\alpha)\right)$ for $\alpha \in K$.
For a real number $\lambda > 0$, we define the set $D(\lambda) \subset
{\mathbb R} \times {\mathbb C}$ by
\[
D(\lambda) := \left\{(u, s + t i) \in {\mathbb R} \times {\mathbb C} \mid \lvert u \rvert, \lvert s \rvert \le 1/4, \lvert t \rvert \le \lambda \right\}.
\]
This $D(\lambda)$ is a convex set which is symmetric with respect to
the origin.
We choose $\lambda$ sufficiently large so that there exists $\alpha
\in O_K$ such that $\alpha \neq 0$ and $\phi (\alpha) \in D(\lambda)$,
by Minkowski's theorem \cite{Hardy}.
Due to the symmetry of $D(\lambda)$, we can assume that $\alpha > 0$.
Note that $\alpha$ is not a rational number (i.e., $\alpha \notin
{\mathbb Z}$) since $\lvert \alpha \rvert \le 1/4$.
We denote the minimal polynomial of $\alpha$ by $X^3+bX^2+cX+d$.
As before, we put $\alpha_1 := \alpha$, $\alpha_2 :=
\sigma_2(\alpha)$, and $\alpha_3 := \sigma_3(\alpha)$.
Since $\lvert \alpha_1 \rvert \le 1/4$, we see that $0 < \alpha_1 \le
1/4$.
Likewise, we have $\lvert\Re(\alpha_2)\rvert \le 1/4$.
Thus,
\[
\lvert \alpha_1 + \alpha_2 + \alpha_3 \rvert =
\lvert \alpha_1 + 2\Re(\alpha_2) \rvert < 1.
\]
Since $\alpha_1 + \alpha_2 + \alpha_3 \in {\mathbb Z}$, we see that
$\alpha_1 + \alpha_2 + \alpha_3 =0$, which implies $b=0$.
We also see that $\alpha_1 \alpha_2 \alpha_3 \in {\mathbb Z}$ is
nonzero.
This means that $\lvert \alpha_1 \alpha_2 \alpha_3 \rvert \ge 1$, and
therefore $\alpha_2 \alpha_3 \ge 4$.
Then, we have
\begin{align*}
c &= \alpha_1 \alpha_2 + \alpha_2 \alpha_3+ \alpha_3 \alpha_1
= \alpha_1 \left(2\Re(\alpha_2)\right) + \alpha_2 \alpha_3
\ge 4 -1/8 > 0.
\end{align*}
Let $f:{\mathbb R} \rightarrow {\mathbb R}$ be a function given by
$f(x)=x^3+bx^2+cx+d$.
Since $b=0$ and $c > 0$, we see that $f$ is strictly monotonically
increasing.
Since $f(\alpha)=0$ and $0 < \alpha < 1$, we have $f(0)<0$ and
$f(1)>0$, which implies $-c-1<d<0$.
Thus, we have $\alpha \in I_{0,c}^{3,ntr} \subset S_{0}^{3,ntr}$.
\end{proof}

In contrast with $S_{0}^{3,ntr}$ and $S_{-3}^{3,ntr}$, the other sets,
$S_{-1}^{3,ntr}$ and $S_{-2}^{3,ntr}$, are not full generative with
respect to real cubic fields which are not totally real.
For example, any algebraic integer in ${\mathbb Q}(\sqrt[3]{2})$ has a
multiple of three as its trace, and therefore it does not belong to
$S_{-1}^{3,ntr}$ and $S_{-2}^{3,ntr}$, which consist of algebraic integers
whose traces are one and two, respectively.

\section{Properties of $I_{m,n}^{3,tr}$}

This section deals with the sets $I_{m,n}^{3,tr}$, which are defined
in \eqref{eq:TotallyRealCubicDefinition}.
By definition, $I_{m,n}^{3,tr}$
consists of real algebraic integers of degree at most three.
Since the open unit interval $(0,1)$ contains no integer, any $\alpha
\in I_{m,n}^{3,tr}$ is irrational.
Let $f(x)=x^3+mx^2+nx+d$ for $m,n,d \in {\mathbb Z}$ satisfying $n \le
-m-3$ and $1 \le d \le -m-n-2$.
Since $f$ is a monic cubic polynomial satisfying $f(0)>0$ and
$f(1)<0$, $f$ has three real roots, only one of which is in $(0,1)$.
Thus, if $\alpha \in I_{m,n}^{3,tr}$ is cubic, then $\alpha$ is
totally real, that is, all conjugates of $\alpha$ are real.
We will show that $I_{m,n}^{3,tr}$ contains at most one, if any,
quadratic integer, and all other elements are totally real cubic
(cf. \thmref{Thm:TotallyRealCubicNoncoincidence}).

As in \eqref{eq:RealCubicSymmetry}, we have
\begin{align}\label{eq:TotallyRealCubicSymmetry}
I_{-b-3, 2b + c +3}^{3,tr} &= \left\{1-\alpha \mid \alpha \in I_{b,c}^{3,tr}\right\}.
\end{align}

We obtain the following result concerning the uniformity of
$I_{b,c}^{3,tr}$.

\begin{thm}\label{Thm:TotallyRealCubicUniformity}
Let $b$ be a fixed integer.
Then, the elements of $I_{b,c}^{3,tr}$ are distributed almost
uniformly in the unit interval for sufficiently large negative $c$.
\end{thm}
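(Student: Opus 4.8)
The plan is to parametrise the elements of $I_{b,c}^{3,tr}$ by the constant term $d$ and to estimate the gaps between consecutive elements by the mean value theorem, in the same spirit as the proof of \thmref{Thm:ImaginaryQuadraticUniformity}. Since $b$ is fixed, the quantity $3x^2+2bx$ is bounded on $[0,1]$; fix a constant $C_b$ with $|3x^2+2bx|\le C_b$ there. Then, for $c$ sufficiently negative (so that in particular $c\le -b-3$ and $I_{b,c}^{3,tr}$ is defined, and $c<-C_b-1$), the derivative of $f_d(x):=x^3+bx^2+cx+d$ is $f_d'(x)=3x^2+2bx+c<0$ throughout $[0,1]$. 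Writing $N:=-b-c-2$ for the number of admissible values of $d$ (so $1\le d\le N$), we have $f_d(0)=d>0$ and $f_d(1)=1+b+c+d\le -1<0$, hence each $f_d$ has a \emph{unique} root $\alpha_d\in(0,1)$.

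Next I would establish that $I_{b,c}^{3,tr}$ has exactly $N$ elements. Set $g(x):=-(x^3+bx^2+cx)$ on $[0,1]$; then $g'(x)=-f_d'(x)>0$, so $g$ is a strictly increasing bijection of $[0,1]$ onto $[0,-(1+b+c)]$, and $g(\alpha_d)=d$. Consequently $d\mapsto\alpha_d$ is strictly increasing, the roots $\alpha_1<\alpha_2<\cdots<\alpha_N$ are pairwise distinct, no $\alpha\in(0,1)$ can be a root of two different $f_d$ (the difference $f_d-f_{d'}$ is the nonzero constant $d-d'$), and therefore $I_{b,c}^{3,tr}=\{\alpha_1,\dots,\alpha_N\}$, listed in increasing order. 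Whether an individual $\alpha_d$ is cubic or quadratic is irrelevant for the present statement.

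For the gap estimate, put $\Delta_j:=\alpha_{j+1}-\alpha_j$ for $j\in\{1,\dots,N-1\}$. Applying the mean value theorem to $g$ on $[\alpha_j,\alpha_{j+1}]$ yields $\xi_j\in(\alpha_j,\alpha_{j+1})\subset(0,1)$ with
\[
1=g(\alpha_{j+1})-g(\alpha_j)=g'(\xi_j)\,\Delta_j=\bigl(-c-(3\xi_j^2+2b\xi_j)\bigr)\Delta_j ,
\]
so $\Delta_j=\bigl(-c-(3\xi_j^2+2b\xi_j)\bigr)^{-1}$. Since $N=-c-(b+2)$, this gives
\[
\Delta_j-\frac1N=\frac{(3\xi_j^2+2b\xi_j)-(b+2)}{\bigl(-c-(3\xi_j^2+2b\xi_j)\bigr)\bigl(-c-(b+2)\bigr)} ,
\]
whose numerator is bounded in absolute value by a constant depending only on $b$, while both factors in the denominator are $\ge -c-\max(C_b,|b+2|)$, so the denominator exceeds $\tfrac14 c^2$ once $c$ is sufficiently negative. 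Hence
\[
\max_{j\in\{1,\dots,N-1\}}\left|\Delta_j-\frac1N\right|=O\!\left(\frac1{c^2}\right)=O\!\left(\frac1{N^2}\right),
\]
using $N=-c-b-2$, and \eqref{eq:MaxOrderInequality} then gives the almost uniformity of $I_{b,c}^{3,tr}$ for sufficiently large negative $c$.

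The computations are elementary throughout; the only points requiring genuine care are verifying that each admissible $f_d$ contributes exactly one root in $(0,1)$ and that distinct $d$ produce distinct roots, so that $\#I_{b,c}^{3,tr}=N=-b-c-2$ — this is what makes $1/N$ the correct comparison value in the gap estimate — together with keeping the signs straight, as $c$ is negative. I do not expect any obstacle beyond this bookkeeping.
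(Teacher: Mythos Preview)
Your proposal is correct and follows essentially the same approach as the paper: both parametrise the elements by $d$, observe that consecutive elements satisfy a relation amenable to the mean value theorem applied to $f_d'(x)=3x^2+2bx+c$, and deduce $|\Delta_j-1/N|=O(1/N^2)$ from the fact that $3x^2+2bx$ is bounded on $[0,1]$. The paper splits into cases $b=-1$ and $b\ge0$ (after using the symmetry \eqref{eq:TotallyRealCubicSymmetry}) to get explicit constants, whereas you handle all $b$ uniformly via $C_b$ and are more explicit about verifying $\#I_{b,c}^{3,tr}=N$; these are cosmetic differences only.
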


\begin{proof}
We first introduce some notation.
Let $b,c,d$ be integers satisfying $c \le -b-3$ and $1 \le d \le
-b-c-2$.
Let $f_{d}(x)=x^3+bx^2+cx+d$, and let $\alpha_{d}$ be the root of
$f_{d}$ in the open unit interval $(0,1)$.
We see easily that $\alpha_{d-1} < \alpha_{d}$ and
$f_{d}(\alpha_{d-1})=1$ hold for $d \in
\left\{2,3,\dots,-b-c-2\right\}$.
Let $\Delta_{d} = \alpha_{d} - \alpha_{d-1}$ ($d \in
\left\{2,3,\dots,-b-c-2\right\}$).
By the mean value theorem, there exists $\beta \in \left(\alpha_{d-1},
\alpha_{d} \right)$ such that $f_{d}'(\beta)=-\Delta_{d}^{-1}$.

By \eqref{eq:TotallyRealCubicSymmetry}, it suffices to show the
assertion for $b \ge -1$.
We consider two cases.
If $b = -1$, we easily verify that $c-1/3 \le f_{d}'(x) < c+1$
holds for $x \in \left(0, 1 \right)$.
This gives
\[
\left(-c+1/3\right)^{-1} \le \Delta_{d} < \left(-c-1\right)^{-1}.
\]
We denote by $N$ the number of the elements of $I_{-1,c}^{3,tr}$,
namely $N = -c-1$.
Then, we have
\[
\frac{-4/3}{N(N + 4/3)} = \frac{1}{N + 4/3} -\frac{1}{N} \le \Delta_{d} -\frac{1}{N}< \frac{1}{N} -\frac{1}{N} =0,
\]
which implies
\[
\max_{d \in \left\{2,3,\dots, N\right\}} \left| \Delta_{d} -\frac{1}{N} \right|
\le \frac{4/3}{N(N + 4/3)}.
\]
Thus, for sufficiently large negative $c$ (and therefore sufficiently
large $N$), the elements of $I_{-1,c}^{3,tr}$ are distributed almost
uniformly in the unit interval.

If $b \ge 0$, we see that $c < f_{d}'(x) < 3 + 2 b + c$ holds for $x
\in \left(0, 1 \right)$.
Suppose that $c < -2 b -3$.
Then, we have
\[
\left(-c\right)^{-1} < \Delta_{d} < \left(-3 - 2 b - c\right)^{-1}.
\]
As above, we denote by $N$ the number of the elements of
$I_{b,c}^{3,tr}$, i.e., $N = -b-c-2$.
Then, we have
\[
\frac{-b-2}{N(N + b + 2)} = \frac{1}{N + b + 2} -\frac{1}{N} < \Delta_{d} -\frac{1}{N}< \frac{1}{N -b -1} -\frac{1}{N} = \frac{b+1}{N(N - b -1)},
\]
which implies
\[
\max_{d \in \left\{2,3,\dots, N\right\}} \left| \Delta_{d} -\frac{1}{N} \right|
< \max \left\{ \frac{b+2}{N(N + b + 2)}, \frac{b+1}{N(N - b -1)} \right\}.
\]
Thus, for sufficiently large negative $c$ (and therefore sufficiently
large $N$), the elements of $I_{b,c}^{3,tr}$ are distributed almost
uniformly in the unit interval.
\end{proof}

Moreover, for any $I_{m,n}^{3,tr}$ with $m \in \left\{0, -1, -2,
-3\right\}$ and $n \le -m-3$, we can show that its elements are
divided into the intervals $(0,1/2)$ and $(1/2,1)$ as equally as
possible, similarly to $I_{n}^{2,r}$ ($n \ge 1$), $I_{n}^{2,i}$ ($n
\ge 1$), and $I_{m,n}^{3,ntr}$ ($m \in \left\{0, -1, -2, -3\right\}$, $n
\ge -m+1$) (see discussion following \propref{Prop:CubicUniformity}).

We now establish the arithmetical independence
of $I_{m,n}^{3,tr}$ with $m \in \left\{0, -1, -2, -3\right\}$.
In addition, we show that such $I_{m,n}^{3,tr}$ contains at most one, if
any, real quadratic integer, and all other elements are totally real
cubic.

\begin{thm}\label{Thm:TotallyRealCubicNoncoincidence}
Let $b, c$ be integers satisfying $-3 \le b \le 0$ and $c \le -b-3$.
Then, ${\mathbb Q}(\alpha) \neq {\mathbb Q}(\beta)$ holds for all
$\alpha, \beta \in I_{b,c}^{3,tr}$ with $\alpha \neq \beta$.
Moreover, all the elements of $I_{b,c}^{3,tr}$ are totally real cubic
if $c$ is of the form $-n^2 +(b-1) n -1$ ($n \in {\mathbb
  Z}$, $n \ge \delta_b$) or $-n^2 + b n$ ($n \in {\mathbb Z}$, $n \ge 1 +
\epsilon_b -\epsilon_{-3-b}$), where $\delta_b, \epsilon_b$ ($b \in
\left\{0, -1, -2, -3\right\}$) are integers defined by $\delta_{0}
=\delta_{-1} = 1$, $\delta_{-2} =\delta_{-3} = 0$, $\epsilon_{0} =
1$, $\epsilon_{-1} =\epsilon_{-2} =\epsilon_{-3} = 0$.
Otherwise all the elements, except one, are totally real cubic.
The exception is the element that is real quadratic.
Specifically, for $c$ satisfying $-n^2 +(b-1) n -1 < c < -n^2 + b n$
($n \in {\mathbb Z}$, $n \ge 1 + \epsilon_b$), the unique quadratic
element is a root of the polynomial $x^3 +b x^2 + c x - (n-b) (c + n^2
-b n)$, and its minimal polynomial is given by $x^2 + n x + c + n^2 -b
n$.
The same is true for $c$ satisfying $-n^2 + b n < c < -n^2 +(b-1) n
-1$ ($n \in {\mathbb Z}$, $n \le -3 -\epsilon_{-3-b}$).
\end{thm}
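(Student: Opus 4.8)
The plan is to split the statement into a ``structure'' part — deciding when an element of $I_{b,c}^{3,tr}$ is totally real cubic and when it is real quadratic — and a ``non\-/coincidence'' part, and to lighten the casework in both by the symmetry \eqref{eq:TotallyRealCubicSymmetry}. For the structure part, first I would record the dichotomy: for $d\in\{1,\dots,-b-c-2\}$ one has $f_d(0)=d>0$ and $f_d(1)=1+b+c+d\le-1<0$ where $f_d(x):=x^3+bx^2+cx+d$, so the three real roots of $f_d$ lie one in each of $(-\infty,0)$, $(0,1)$, $(1,\infty)$; hence the element $\alpha\in(0,1)$ is totally real cubic unless $f_d$ is reducible over ${\mathbb Q}$, in which case $\alpha$ is real quadratic, being an algebraic integer in $(0,1)$ that cannot be rational. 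Next, $f_d$ is reducible iff it has an integer root $k$, necessarily with $k\le-1$ or $k\ge2$; writing $f_d=(x-k)(x^2+nx+q)$ forces $n=b+k$, $q=c+n^2-bn$, and $d=-(n-b)q$, and then $\alpha$ is the root in $(0,1)$ of $h(x):=x^2+nx+q$. Since the other root of $h$ is never in $(0,1)$, $h$ has a root there exactly when $h(0)h(1)<0$, i.e. $q(1+n+q)<0$; substituting $q=c+n^2-bn$ this splits into $-n^2+(b-1)n-1<c<-n^2+bn$ (the case $q<0$) and $-n^2+bn<c<-n^2+(b-1)n-1$ (the case $q>0$, which forces $n\le-2$) — exactly the two chains of inequalities in the statement, and then the minimal polynomial of $\alpha$ is $x^2+nx+(c+n^2-bn)$.

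It then remains to pin down the parameter $n$. I would verify, by direct interval arithmetic, that the open intervals $(-n^2+(b-1)n-1,\,-n^2+bn)$ and $(-n^2+bn,\,-n^2+(b-1)n-1)$ attached to the admissible $n$ are pairwise disjoint and that, together with the boundary values $c=-n^2+bn$ and $c=-n^2+(b-1)n-1$, they partition $\{c\in{\mathbb Z}:c\le-b-3\}$; hence for each such $c$ at most one $n$ can occur. At a boundary value the forced $d=-(n-b)q$ lands just outside $\{1,\dots,-b-c-2\}$ (for instance $c=-n^2+bn$ gives $q=0$, $d=0$), so no quadratic element occurs there — this is the ``all elements totally real cubic'' case. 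When $c$ lies strictly inside an interval, one checks $1\le d=-(n-b)q\le-b-c-2$, and this is precisely where the thresholds $n\ge1+\epsilon_b$ and $n\le-3-\epsilon_{-3-b}$ enter: they are the conditions under which the forced $d$ stays in range given $c\le-b-3$. The symmetry \eqref{eq:TotallyRealCubicSymmetry}, which corresponds to $n\mapsto-n-2$ and sends the quadratic element of $I_{b,c}^{3,tr}$ to that of $I_{-b-3,\,2b+c+3}^{3,tr}$, reduces all of this bookkeeping to $b\in\{0,-1\}$.

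For the non\-/coincidence part, suppose $\alpha\neq\beta$ in $I_{b,c}^{3,tr}$ have ${\mathbb Q}(\alpha)={\mathbb Q}(\beta)=:K$. Different degrees are impossible, and two distinct quadratic elements are excluded by the structure part, so $\alpha,\beta$ are totally real cubic and $K$ is a totally real cubic field. Choose embeddings $\sigma_1=\mathrm{id},\sigma_2,\sigma_3$ of $K$ so that $\sigma_2(\alpha)<0<1<\sigma_3(\alpha)$; then $\{\sigma_2(\beta),\sigma_3(\beta)\}$ is the pair of conjugates of $\beta$ outside $(0,1)$, in one of two orders. Writing $\sigma_2(\alpha),\sigma_3(\alpha)=\frac{1}{2}\bigl(-(b+\alpha)\mp\sqrt{D(\alpha)}\bigr)$ and similarly for $\beta$, with $D(t)=b^2-2bt-3t^2-4c$, and using $\bigl|\sqrt{D(\beta)}-\sqrt{D(\alpha)}\bigr|=O\bigl((-c)^{-1/2}\bigr)$, one finds: in the ``aligned'' order all three of $\sigma_1(\alpha-\beta),\sigma_2(\alpha-\beta),\sigma_3(\alpha-\beta)$ have absolute value $<1$ once $-c$ is large, so $N_{K/{\mathbb Q}}(\alpha-\beta)$ — a nonzero rational integer — has absolute value $<1$, a contradiction; in the ``swapped'' order one works instead with $\alpha+\beta$, a totally real cubic algebraic integer (not rational, since its trace over ${\mathbb Q}$ is $-2b$ while $\alpha+\beta\in(0,2)$) whose conjugates $\sigma_2(\alpha+\beta),\sigma_3(\alpha+\beta)$ both lie within $O\bigl((-c)^{-1/2}\bigr)$ of $\frac{1}{2}(-2b-\alpha-\beta)$ while $\sigma_1(\alpha+\beta)\in(0,2)$ — so its minimal polynomial has bounded coefficients (its $x^2$\nobreakdash-coefficient being $2b$) together with a very constrained root pattern; enumerating the finitely many irreducible totally real candidates and checking that none fits — equivalently, noting that the discriminant of such a polynomial is a fixed positive integer while $\bigl(\sigma_2(\alpha+\beta)-\sigma_3(\alpha+\beta)\bigr)^2=O\bigl((-c)^{-1}\bigr)$ — gives a contradiction for $-c$ large. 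The remaining finitely many small $c$ are dispatched by direct computation, and \eqref{eq:TotallyRealCubicSymmetry} again restricts the work to $b\in\{0,-1\}$.

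The step I expect to be the main obstacle is the ``swapped'' subcase of the non\-/coincidence argument: there $\alpha-\beta$ carries no useful smallness and one must pass to $\alpha+\beta$, verify it genuinely has degree three over ${\mathbb Q}$, bound the coefficients of its minimal polynomial sharply enough to keep the candidate list short, and close the argument via the discriminant (or the root pattern), much in the spirit of the table arguments in the proof of \thmref{Thm:CubicNoncoincidence}. A secondary nuisance is the precise bookkeeping in the structure part, where the thresholds $\delta_b,\epsilon_b$ must be matched exactly against the constraint $c\le-b-3$ so that the interval/boundary partition is airtight.
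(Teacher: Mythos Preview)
Your proposal is correct and tracks the paper's strategy closely: reduce to $b\in\{0,-1\}$ via \eqref{eq:TotallyRealCubicSymmetry}, dispose of the aligned embedding order via the norm of $\alpha-\beta$, and in the swapped order pass to $\alpha+\beta$, show it is genuinely cubic, and bound its minimal polynomial. Two organisational differences deserve mention. For the structure part, the paper proceeds case by case in $c$: it pins down precise length\nobreakdash-one intervals (e.g.\ $(n,n+1)$, $(-n-1,-n)$, $(n-\tfrac12,n+\tfrac12)$) containing the two outer roots of $f_d$ and reads off reducibility from whether such an interval contains an integer; your direct factorisation $(x-k)(x^2+nx+q)$ together with the sign condition $h(0)h(1)<0$ reaches the same conclusions more uniformly, without first splitting $c$ into ranges. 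For the swapped subcase, the paper bounds the coefficients of the minimal polynomial of $\alpha+\beta$ and closes by enumerating a short table of irreducible totally real candidates, checking that each has a root outside the admissible interval; your discriminant alternative---bounded conjugates together with $(\sigma_2(\alpha+\beta)-\sigma_3(\alpha+\beta))^2=O((-c)^{-1})$ forcing the (positive integer) discriminant below $1$---is cleaner for large $-c$, at the price of a residual finite check for small $c$, whereas the paper's interval\nobreakdash-plus\nobreakdash-table argument is uniform in $c$ once the root locations are fixed.
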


Note that every integer $c \le -b-3$ falls into exactly one of the
four cases stated in the statement, namely $c = -n^2 +(b-1) n -1$ ($n
\in {\mathbb Z}$, $n \ge \delta_b$), $c = -n^2 + b n$ ($n \in {\mathbb
  Z}$, $n \ge 1 + \epsilon_b -\epsilon_{-3-b}$), $-n^2 +(b-1) n -1 < c
< -n^2 + b n$ ($n \in {\mathbb Z}$, $n \ge 1 + \epsilon_b$), and $-n^2
+ b n < c < -n^2 +(b-1) n -1$ ($n \in {\mathbb Z}$, $n \le -3
-\epsilon_{-3-b}$).

\begin{proof}
We begin with the case $b=0$ with $c = -n^2 -n -1$ ($n \in {\mathbb
  Z}$, $n \ge 1$).
Let $f(x)=x^3+cx+d$ with $d \in \{1,2,\dots,-c-2\}$.
We see that $f$ has three real roots, since $f(0)>0$, $f(1)<0$,
$f(-n-1)<0$, $f(-n)>0$, $f(n)<0$, and $f(n+1)>0$ hold.
More specifically, each of three open intervals $(0,1)$,
$(-n-1,-n)$, and $(n,n+1)$ contains only one of the three roots.
This, together with the fact that $f$ is a monic cubic polynomial with
integer coefficients, implies that $f$ is irreducible, and thus all the
elements of $I_{0,c}^{3,tr}$ are totally real cubic.

We now assume that ${\mathbb Q}(\alpha) = {\mathbb Q}(\beta)$ holds
for some $\alpha, \beta \in I_{0,c}^{3,tr}$ with $\alpha \neq \beta$.
Let $\alpha_i$ ($i \in \{1,2,3\}$) (resp. $\beta_i$ ($i \in \{1,2,3\}$))
be the conjugates of $\alpha$ (resp. $\beta$).
As in the proof of \thmref{Thm:CubicNoncoincidence},
we assume that $\alpha_1 = \alpha$, $\beta_1 = \beta$, and
${\mathbb Q}(\alpha_i)={\mathbb Q}(\beta_i)$ ($i \in \{1,2,3\}$).
Note that $\alpha_i$ and $\beta_i$ with $i > 1$ are in either
$(n,n+1)$ or $(-n-1,-n)$.
We assume that $\alpha_2$ is in $(n,n+1)$ ($\beta_2$ can be in
$(n,n+1)$ or $(-n-1,-n)$).

First assume that $\beta_2 \in (n,n+1)$.
Since $\lvert \alpha_1 - \beta_1 \rvert < 1$, $\lvert \alpha_2 -
\beta_2 \rvert < 1$, and $\lvert \alpha_3 - \beta_3 \rvert < 1$, we
have $\lvert (\alpha_1 - \beta_1)(\alpha_2 - \beta_2)(\alpha_3 -
\beta_3) \rvert < 1$.
Since $(\alpha_1 - \beta_1)(\alpha_2 - \beta_2)(\alpha_3 - \beta_3)
\in {\mathbb Z}$, we see that $(\alpha_1 - \beta_1)(\alpha_2 -
\beta_2)(\alpha_3 - \beta_3) =0$.
This contradicts $\alpha \neq \beta$.

Now assume that $\beta_2 \in (-n-1,-n)$.
Then, we have $0< \alpha_1 + \beta_1 < 2$, $-1 < \alpha_2 + \beta_2 <
1$, and $-1 < \alpha_3 + \beta_3 < 1$, which give $-2<(\alpha_1 +
\beta_1)(\alpha_2 + \beta_2)(\alpha_3 + \beta_3)<2$.
Since $(\alpha_1 + \beta_1)(\alpha_2 + \beta_2)(\alpha_3 + \beta_3)
\in {\mathbb Z}$, we see that $(\alpha_1 + \beta_1)(\alpha_2 +
\beta_2)(\alpha_3 + \beta_3) \in \{-1,0,1\}$.
We now show that $\alpha_1 + \beta_1 \notin {\mathbb Q}$ (i.e.,
$\alpha_1 + \beta_1 \notin {\mathbb Z}$).
Assuming $r:=\alpha_1 + \beta_1 \in {\mathbb Q}$, we have $3 r=
(\alpha_1 + \beta_1)+(\alpha_2 + \beta_2)+(\alpha_3 + \beta_3)=0$,
which contradicts $r \neq 0$.
Thus, $\alpha_1 + \beta_1$ is a cubic algebraic integer.
We have
\[
-5<(\alpha_1 + \beta_1)(\alpha_2 + \beta_2)+ (\alpha_2 + \beta_2)(\alpha_3 + \beta_3)+ (\alpha_3 + \beta_3)(\alpha_1 + \beta_1)<5.
\]
Therefore, the minimal polynomial of $\alpha_1 + \beta_1$, denoted by
$x^3 + p x - q$, satisfies $\lvert p \rvert \le 4$ and $q \in
\{-1,0,1\}$.
Table~\ref{tab:TotallyRealb=0pr1} shows numerical roots of $x^3 + p x
- q$ which is irreducible and whose discriminant is positive.
\begin{table}
\caption{\label{tab:TotallyRealb=0pr1}
Numerical roots of irreducible polynomials $x^3 + p x - q$ with $\lvert p \rvert \le 4$, $q \in \{-1,0,1\}$, and positive discriminant}
%\begin{ruledtabular}
\begin{center}
\begin{tabular}{lrrr}
Polynomial &
 &
Roots &
\\
\hline
$x^3-4 x+1$ & $-2.11491$ &  $0.25410$ & $1.86081$\\
$x^3-4 x-1$ & $-1.86081$ & $-0.25410$ & $2.11491$\\
$x^3-3 x+1$ & $-1.87939$ &  $0.34730$ & $1.53209$\\
$x^3-3 x-1$ & $-1.53209$ & $-0.34730$ & $1.87939$
\end{tabular}
\end{center}
%\end{ruledtabular}
\end{table}
From the above argument, all the roots of the minimal polynomial of
$\alpha_1 + \beta_1$ are greater than $-1$, but every polynomial in
Table~\ref{tab:TotallyRealb=0pr1} has a root less than $-1$.
So again we arrive at a contradiction.

The proof for the case $b=0$ with $c = -n^2$ ($n \in {\mathbb Z}$, $n \ge 2$)
can be obtained by a similar argument.

We now consider the case $b=0$ with $-n^2 -n -1 < c < -n^2$ ($n \in {\mathbb Z}$,
$n \ge 2$).
We can see that $f$ satisfies $f(0)>0$, $f(1)<0$, $f(-n-1)<0$,
$f(-n)>0$, $f(n-1/2)<0$, and $f(n+1/2)>0$.
Thus, $f$ has three real roots, each of which is contained in one of
the three intervals $(0,1)$, $(-n-1,-n)$, and $(n-1/2,n+1/2)$.
We now show that $I_{0,c}^{3,tr}$ contains only one quadratic element
and that all other elements are cubic.
In order to clarify the dependence of $f$ on $d$, we denote
$x^3+cx+d$ by $f_d(x)$ below.
Letting $i = -c - n^2$, we find $i \in \{1,2,\dots,n\}$ and $n i \in
\{1,2,\dots,-c-2\}$.
Since $x^3 + c x + n i = (x -n) (x^2 + n x -i)$, the root of $f_d$
with $d = n i$ in the interval $(0,1)$ is a quadratic algebraic
integer, whose minimal polynomial is given by $x^2 + n x -i$.
Let $\gamma(d)$ be the root of $f_d$ in the interval $(n-1/2,n+1/2)$.
We see easily that $\gamma(d)$ satisfies $\gamma(1) > \gamma(2) > \dots
> \gamma({-c-2})$.
Since $n$ is only one integer in the interval $(n-1/2,n+1/2)$, $f_d$
is reducible if and only if $d = n i$.
Thus, the above quadratic element is the only quadratic element in
$I_{0,c}^{3,tr}$, and all the other elements are cubic algebraic integers.

We denote, for now, the root of $f_d$ in the interval $(0,1)$ by
$\alpha(d)$ ($d \in \{1,2,\dots,-c-2\}$).
As noted above, $\alpha(n i)$ is the only quadratic element in
$I_{0,c}^{3,tr}$, and thus ${\mathbb Q}(\alpha(d)) \neq {\mathbb
  Q}(\alpha(n i))$ holds for all $d \neq n i$.
Hence it suffices to show that ${\mathbb Q}(\alpha(d)) \neq {\mathbb
  Q}(\alpha(d'))$ holds for all $d, d' \in \{1,2,\dots,-c-2\}
\setminus \{n i\}$ with $d \neq d'$.
We put $\alpha = \alpha(d)$ and $\beta = \alpha(d')$ and assume that
${\mathbb Q}(\alpha) = {\mathbb Q}(\beta)$, as before.
We let $\alpha_i$ ($i \in \{1,2,3\}$) (resp. $\beta_i$ ($i \in
\{1,2,3\}$)) be the conjugates of $\alpha$ (resp. $\beta$) and let
$\alpha_1 = \alpha$, $\beta_1 = \beta$, and ${\mathbb
  Q}(\alpha_i)={\mathbb Q}(\beta_i)$ ($i \in \{1,2,3\}$).
Note that $\alpha_i$ and $\beta_i$ with $i > 1$ are in either
$(n-1/2,n+1/2)$ or $(-n-1,-n)$.
We assume that $\alpha_2$ is in $(n-1/2,n+1/2)$ ($\beta_2$ can be in
$(n-1/2,n+1/2)$ or $(-n-1,-n)$).

First assume that $\beta_2 \in (n-1/2,n+1/2)$.
Since $\lvert \alpha_1 - \beta_1 \rvert < 1$, $\lvert \alpha_2 -
\beta_2 \rvert < 1$, and $\lvert \alpha_3 - \beta_3 \rvert < 1$, we
have $\lvert (\alpha_1 - \beta_1)(\alpha_2 - \beta_2)(\alpha_3 -
\beta_3) \rvert < 1$.
Since $(\alpha_1 - \beta_1)(\alpha_2 - \beta_2)(\alpha_3 - \beta_3)
\in {\mathbb Z}$, we see that $(\alpha_1 - \beta_1)(\alpha_2 -
\beta_2)(\alpha_3 - \beta_3) =0$.
This contradicts $\alpha \neq \beta$.

Now assume that $\beta_2 \in (-n-1,-n)$.
Then, we have $0< \alpha_1 + \beta_1 < 2$, $-3/2 < \alpha_2 + \beta_2 <
1/2$, and $-3/2 < \alpha_3 + \beta_3 < 1/2$, which give $-3/2<(\alpha_1 +
\beta_1)(\alpha_2 + \beta_2)(\alpha_3 + \beta_3)<9/2$.
Since $(\alpha_1 + \beta_1)(\alpha_2 + \beta_2)(\alpha_3 + \beta_3)
\in {\mathbb Z}$, we see that $(\alpha_1 + \beta_1)(\alpha_2 +
\beta_2)(\alpha_3 + \beta_3) \in \{-1,0,1,2,3,4\}$.
We now show that $\alpha_1 + \beta_1 \notin {\mathbb Q}$ (i.e.,
$\alpha_1 + \beta_1 \notin {\mathbb Z}$).
Assuming $r:=\alpha_1 + \beta_1 \in {\mathbb Q}$, we have $3 r=
(\alpha_1 + \beta_1)+(\alpha_2 + \beta_2)+(\alpha_3 + \beta_3)=0$,
which contradicts $r \neq 0$.
Thus, $\alpha_1 + \beta_1$ is a cubic algebraic integer.
We have
\[
-\frac{27}{4}<(\alpha_1 + \beta_1)(\alpha_2 + \beta_2)+ (\alpha_2 + \beta_2)(\alpha_3 + \beta_3)+ (\alpha_3 + \beta_3)(\alpha_1 + \beta_1)<\frac{17}{4}.
\]
Therefore, the minimal polynomial of $\alpha_1 + \beta_1$, denoted by
$x^3 + p x - q$, satisfies $p \in \{-6,-5,\dots,3,4\}$ and $q \in
\{-1,0,1,2,3,4\}$.
Table~\ref{tab:TotallyRealb=0pr2} shows numerical roots of $x^3 + p x
- q$ which is irreducible and whose discriminant is positive.
\begin{table}
\caption{\label{tab:TotallyRealb=0pr2}
Numerical roots of irreducible polynomials $x^3 + p x - q$ with $p \in \{-6,-5,\dots,3,4\}$, $q \in \{-1,0,1,2,3,4\}$, and positive discriminant}
%\begin{ruledtabular}
\begin{center}
\begin{tabular}{lrrr}
Polynomial &
&
Roots &
\\
\hline
$x^3-6 x+1$ & $-2.52892$ &  $0.16745$ & $2.36147$\\
$x^3-6 x-1$ & $-2.36147$ & $-0.16745$ & $2.52892$\\
$x^3-6 x-2$ & $-2.26180$ & $-0.33988$ & $2.60168$\\
$x^3-6 x-3$ & $-2.14510$ & $-0.52398$ & $2.66908$\\
$x^3-5 x+1$ & $-2.33006$ &  $0.20164$ & $2.12842$\\
$x^3-5 x-1$ & $-2.12842$ & $-0.20164$ & $2.33006$\\
$x^3-5 x-3$ & $-1.83424$ & $-0.65662$ & $2.49086$\\
$x^3-4 x+1$ & $-2.11491$ &  $0.25410$ & $1.86081$\\
$x^3-4 x-1$ & $-1.86081$ & $-0.25410$ & $2.11491$\\
$x^3-4 x-2$ & $-1.67513$ & $-0.53919$ & $2.21432$\\
$x^3-3 x+1$ & $-1.87939$ &  $0.34730$ & $1.53209$\\
$x^3-3 x-1$ & $-1.53209$ & $-0.34730$ & $1.87939$
\end{tabular}
\end{center}
%\end{ruledtabular}
\end{table}
From the above argument, all the roots of the minimal polynomial of
$\alpha_1 + \beta_1$ are greater than $-3/2$, but every polynomial in
Table~\ref{tab:TotallyRealb=0pr2} has a root less than $-3/2$.
So again we arrive at a contradiction.

The proof for the case $b=0$ with $-n^2 < c < -n^2 -n -1$ ($n \in {\mathbb Z}$,
$n \le -3$) can be obtained by a similar argument.

Thus we have completed the proof for the case $b=0$.

The proof for the case $b=-1$ is obtained from an argument analogous to that used
in the case $b=0$.

By \eqref{eq:TotallyRealCubicSymmetry} together with discussions
succeeding \eqref{eq:RealQuadraticSymmetry} and
preceding \eqref{eq:RealCubicSymmetry}, proofs for the cases $b=-2$ and $b=-3$
follow from the cases $b=-1$ and $b=0$, respectively.
\end{proof}

For each integer $m$ in $\left\{0, -1, -2, -3\right\}$, we define the
set $S_{m}^{3,tr}$ by
\[
S_{m}^{3,tr}:=\bigcup_{n \le -m-3} I_{m,n}^{3,tr}.
\]
We denote by ${\rm Quad}(S)$ the set of all quadratic elements of a
given set $S$.
From the above theorem and the discussion at the beginning of the
proof of \thmref{Thm:RealQuadraticCover}, we obtain:

\begin{cor}\label{Cor:TotallyRealCubicRealQuadraticCover}
The followings hold:
\begin{enumerate}[(i)]
\item The set of all quadratic algebraic integers in $S_{0}^{3,tr}$
(resp. $S_{-1}^{3,tr}$, $S_{-2}^{3,tr}$, and $S_{-3}^{3,tr}$)
is expressed as
\begin{align*}
  {\rm Quad}(S_{0}^{3,tr}) &= \bigcup_{n \in {\mathbb Z} \setminus \left\{-2, -1, 0, 1\right\}} I_{n}^{2,r} = \left(S^{2,r} \setminus \left\{ \frac{-1 + \sqrt{5}}{2}\right\}\right) \bigcup \left(-S^{2,r} + 1 \right),\\
  {\rm Quad}(S_{-1}^{3,tr}) &= {\rm Quad}(S_{-2}^{3,tr})=\bigcup_{n \in {\mathbb Z} \setminus \left\{-2, -1, 0\right\} } I_{n}^{2,r} = S^{2,r} \bigcup \left(-S^{2,r} + 1 \right),\\
  {\rm Quad}(S_{-3}^{3,tr}) &= \bigcup_{n \in {\mathbb Z} \setminus \left\{-3, -2, -1, 0\right\}} I_{n}^{2,r} = S^{2,r} \bigcup \left(\left(-S^{2,r} + 1 \right) \setminus \left\{ \frac{3 - \sqrt{5}}{2}\right\}\right),
\end{align*}
where $S^{2,r}$ is given by \eqref{eq:S2r} in Section
\ref{sec:PropertiesofI_{n}^{2,r}}.

\item In particular, $S_{m}^{3,tr}$ with $m \in \left\{0, -1, -2,
  -3\right\}$ includes a primitive element of any real quadratic
  field.
\end{enumerate}
\end{cor}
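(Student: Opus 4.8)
The plan is to read off the quadratic elements of each $I_{b,c}^{3,tr}$ from \thmref{Thm:TotallyRealCubicNoncoincidence} and then reindex in terms of the sets $I_{n}^{2,r}$. Since the degree of an algebraic number does not depend on which set we view it in, we have ${\rm Quad}(S_{b}^{3,tr}) = \bigcup_{c \le -b-3} {\rm Quad}(I_{b,c}^{3,tr})$. By \thmref{Thm:TotallyRealCubicNoncoincidence} (and the Note following it, which records that every admissible $c$ falls into exactly one of four cases), ${\rm Quad}(I_{b,c}^{3,tr})$ is empty when $c = -n^2+(b-1)n-1$ or $c = -n^2+bn$, and is the single root in $(0,1)$ of $x^2 + nx + (c+n^2-bn)$ when $c$ lies strictly between two such consecutive values. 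So the first step is simply to collect these roots over all admissible $c$.

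Next I would substitute $c' := c + n^2 - bn$. A direct check shows that, for a fixed $n$, the inequality $-n^2+(b-1)n-1 < c < -n^2+bn$ is equivalent to $-n \le c' \le -1$, and $-n^2+bn < c < -n^2+(b-1)n-1$ to $1 \le c' \le -n-2$; comparing with the definition \eqref{eq:RealQuadraticI}, as $c'$ runs over these ranges the roots of $x^2+nx+c'$ in $(0,1)$ run over exactly $I_{n}^{2,r}$ (with $n \ge 1$ in the first range, $n \le -3$ in the second). Feeding in the admissible ranges of $n$ from \thmref{Thm:TotallyRealCubicNoncoincidence}, namely $n \ge 1+\epsilon_b$ and $n \le -3-\epsilon_{-3-b}$, gives
\[
{\rm Quad}(S_{b}^{3,tr}) = \bigcup_{n \ge 1+\epsilon_b} I_{n}^{2,r} \cup \bigcup_{n \le -3-\epsilon_{-3-b}} I_{n}^{2,r};
\]
discarding the empty indices $n \in \{0,-1,-2\}$, this is the index set $\mathbb{Z}\setminus\{-2,-1,0,1\}$ for $b=0$, $\mathbb{Z}\setminus\{-2,-1,0\}$ for $b \in \{-1,-2\}$, and $\mathbb{Z}\setminus\{-3,-2,-1,0\}$ for $b=-3$, as claimed. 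To rewrite these in terms of $S^{2,r}$, I would use $\bigcup_{n\ge1}I_{n}^{2,r}=S^{2,r}$ and, via \eqref{eq:RealQuadraticSymmetry}, $\bigcup_{n\le-3}I_{n}^{2,r}=-S^{2,r}+1$, together with the singleton computations $I_{1}^{2,r}=\{(-1+\sqrt{5})/2\}$ (root of $x^2+x-1$) and $I_{-3}^{2,r}=\{(3-\sqrt{5})/2\}$ (root of $x^2-3x+1$), so that $\bigcup_{n\ge2}I_{n}^{2,r}=S^{2,r}\setminus\{(-1+\sqrt{5})/2\}$ and $\bigcup_{n\le-4}I_{n}^{2,r}=(-S^{2,r}+1)\setminus\{(3-\sqrt{5})/2\}$. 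This establishes (i). As a shortcut, once the cases $b=0$ and $b=-1$ are handled directly, the cases $b=-3$ and $b=-2$ follow from \eqref{eq:TotallyRealCubicSymmetry}, which yields $S_{-b-3}^{3,tr}=\{1-\alpha\mid\alpha\in S_{b}^{3,tr}\}$ and hence ${\rm Quad}(S_{-b-3}^{3,tr})=\{1-\alpha\mid\alpha\in{\rm Quad}(S_{b}^{3,tr})\}$, using $1-(-S^{2,r}+1)=S^{2,r}$.

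Part (ii) is then immediate from (i): in each of the four cases ${\rm Quad}(S_{b}^{3,tr})$ contains at least one of $S^{2,r}$ or $-S^{2,r}+1$ entirely, and since $\alpha$ and $1-\alpha$ generate the same field, \thmref{Thm:RealQuadraticCover}(ii) shows each of these two sets already contains a primitive element of every real quadratic field. The whole argument is bookkeeping; the only place demanding care is the reindexing through the offsets $\epsilon_b$ and $\epsilon_{-3-b}$, in particular verifying that the singletons $I_{1}^{2,r}$ (for $b=0$) and $I_{-3}^{2,r}$ (for $b=-3$) are exactly the sets that drop out because the corresponding boundary value of $c$ contributes no quadratic element. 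I do not anticipate any genuine obstacle beyond that.
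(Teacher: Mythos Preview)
Your proposal is correct and follows essentially the same route as the paper: the paper simply states that the corollary follows ``from the above theorem and the discussion at the beginning of the proof of \thmref{Thm:RealQuadraticCover},'' and what you have written is precisely the bookkeeping needed to unpack that sentence. Your careful reindexing via $c' = c + n^2 - bn$ and the use of the offsets $\epsilon_b$, $\epsilon_{-3-b}$ make explicit what the paper leaves to the reader.
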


To rephrase \cororef{Cor:TotallyRealCubicRealQuadraticCover} (ii),
$S_{m}^{3,tr}$ with $m \in \left\{0, -1, -2, -3\right\}$ are full
generative with respect to real quadratic fields.
Moreover, analogously to Theorem \ref{Thm:RealCubicCover}, we show that
$S_{m}^{3,tr}$ with $m=0$ and $-3$ (i.e., $S_{0}^{3,tr}$ and
$S_{-3}^{3,tr}$) are full generative with respect to totally real
cubic fields:
\begin{thm}\label{Thm:TotallyRealCubicCover}
The set $S_{0}^{3,tr}$ includes a primitive element of any totally real cubic field.
The same is true of $S_{-3}^{3,tr}$.
\end{thm}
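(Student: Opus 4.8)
The plan is to transcribe the geometry-of-numbers argument behind \thmref{Thm:RealCubicCover} to the totally real setting. Fix a totally real cubic field $K$ with ring of integers $O_K$, and let $\sigma_1,\sigma_2,\sigma_3\colon K\hookrightarrow{\mathbb R}$ be its three (real) embeddings, with $\sigma_1$ the identity. Consider the Minkowski embedding $\phi\colon K\to{\mathbb R}^3$, $\phi(\alpha):=(\sigma_1(\alpha),\sigma_2(\alpha),\sigma_3(\alpha))$, under which $\phi(O_K)$ is a full lattice. For $\lambda>0$ put
\[
C(\lambda):=\left\{(u_1,u_2,u_3)\in{\mathbb R}^3 ~\middle|~ \lvert u_1\rvert\le 1/4,\ \lvert u_2+u_3\rvert\le 1/2,\ \lvert u_2-u_3\rvert\le\lambda\right\},
\]
a compact, convex, origin-symmetric set whose volume equals $\lambda/2$ (a single linear change of variables). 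Choosing $\lambda$ large enough and applying Minkowski's theorem \cite{Hardy}, we obtain $\alpha\in O_K$ with $\alpha\neq 0$ and $\phi(\alpha)\in C(\lambda)$. Since $C(\lambda)$ is symmetric about the origin we may replace $\alpha$ by $-\alpha$ if needed, and since $\sigma_1$ is injective we get $0<\alpha_1\le 1/4$, where $\alpha_i:=\sigma_i(\alpha)$. In particular $\alpha\notin{\mathbb Z}$, and as $[K:{\mathbb Q}]=3$ is prime this already forces ${\mathbb Q}(\alpha)=K$, so $\alpha$ is a primitive element of $K$.

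Next I would read off the coefficients of the minimal polynomial $x^3+mx^2+nx+d$ of $\alpha$ (whose roots are $\alpha_1,\alpha_2,\alpha_3$). The trace $\alpha_1+\alpha_2+\alpha_3\in{\mathbb Z}$ satisfies $\lvert\alpha_1+\alpha_2+\alpha_3\rvert\le\lvert\alpha_1\rvert+\lvert\alpha_2+\alpha_3\rvert\le 3/4<1$, so it is $0$ and $m=0$. The norm $\alpha_1\alpha_2\alpha_3$ is a nonzero integer, hence $\lvert\alpha_2\alpha_3\rvert=\lvert\alpha_1\alpha_2\alpha_3\rvert/\alpha_1\ge 4$; on the other hand
\[
\alpha_2\alpha_3=\frac{(\alpha_2+\alpha_3)^2-(\alpha_2-\alpha_3)^2}{4}\le\frac{(\alpha_2+\alpha_3)^2}{4}\le\frac{1}{16},
\]
which rules out $\alpha_2\alpha_3\ge 4$ and forces $\alpha_2\alpha_3\le-4$. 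Therefore $n=\alpha_1(\alpha_2+\alpha_3)+\alpha_2\alpha_3\le 1/8-4<-3$, so $n\le-4$; and $d=-\alpha_1\alpha_2\alpha_3>0$ since $\alpha_1>0$ and $\alpha_2\alpha_3<0$, so $d\ge 1$.

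To conclude I would check $\alpha\in I_{0,n}^{3,tr}$. Put $f(x)=x^3+nx+d$; then $f'(x)=3x^2+n\le 3+n\le-1<0$ on $[0,1]$, so $f$ is strictly decreasing there, and from $f(0)=d>0$ and $f(\alpha_1)=0$ with $\alpha_1\in(0,1)$ we obtain $f(1)=1+n+d<0$, i.e.\ $d\le-n-2=-m-n-2$. Hence $\alpha\in I_{0,n}^{3,tr}\subseteq S_0^{3,tr}$, which proves the first assertion. The statement for $S_{-3}^{3,tr}$ follows from the symmetry \eqref{eq:TotallyRealCubicSymmetry} with $b=0$, which gives $S_{-3}^{3,tr}=\{1-\alpha\mid\alpha\in S_0^{3,tr}\}$, together with the fact that $1-\alpha$ is a primitive element of $K$ precisely when $\alpha$ is --- exactly the reduction already used at the start of the proof of \thmref{Thm:RealCubicCover}.

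The main obstacle is that, in contrast to \thmref{Thm:RealCubicCover} where $\alpha_2\alpha_3=\lvert\sigma_2(\alpha)\rvert^2\ge 0$ automatically, here $\alpha_2\alpha_3$ could a priori be positive; the key device is to build into the convex body the slab $\lvert u_2+u_3\rvert\le 1/2$, so that $\alpha_2\alpha_3\le\tfrac14(\alpha_2+\alpha_3)^2$ is tiny and positivity is excluded, forcing $\alpha_2\alpha_3$ to be a large negative integer. Arranging this slab while keeping the volume of $C(\lambda)$ unbounded in $\lambda$ (so that Minkowski's theorem applies) is the one point that needs care; the remaining steps are a routine transcription of the argument for \thmref{Thm:RealCubicCover}.
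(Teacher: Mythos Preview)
Your proof is correct and follows essentially the same approach as the paper: a geometry-of-numbers argument in which the convex body contains the slab $\lvert u_2+u_3\rvert\le\text{small}$, forcing the trace to vanish and $\alpha_2\alpha_3$ to be large and negative. Your choice of body $C(\lambda)$ (with $\lvert u_2-u_3\rvert\le\lambda$ as the unbounded direction) is slightly cleaner than the paper's, which instead bounds $\lvert u_2\rvert,\lvert u_3\rvert\le\lambda$ and needs a preliminary step to select an $\epsilon>0$ with $D(1/4,3+\epsilon,\epsilon)\cap\phi(O_K)=\{0\}$ before invoking Minkowski; your version collapses this into a single application.
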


\begin{proof}
By \eqref{eq:TotallyRealCubicSymmetry}, it suffices to prove the statement
for $S_{0}^{3,tr}$.
Let $K$ be a totally real cubic field. 
Let $O_K$ be the ring of algebraic integers of $K$.
We denote by $\sigma_i$, $i=1,2,3$, the three embeddings of $K$ into
${\mathbb C}$, where we may assume that $\sigma_1$ is the identity
map on $K$.
Let $\phi$ be the embedding of $K$ into ${\mathbb R}^3$ defined by $\phi(\alpha) := \left( \sigma_1(\alpha),
\sigma_2(\alpha), \sigma_3(\alpha)\right)$ for $\alpha \in K$.
For positive real numbers $\lambda_1$, $\lambda_2$, and $\lambda_3$,
we define the set $D(\lambda_1,\lambda_2,\lambda_3) \subset {\mathbb
R}^3$ by
\[
D(\lambda_1,\lambda_2,\lambda_3) := \left\{(x_1,x_2,x_3) \in {\mathbb R}^3 \mid \lvert x_1 \rvert\le \lambda_1, \lvert x_2 \rvert, \lvert x_3 \rvert \le \lambda_2, \lvert x_2+x_3 \rvert \le \lambda_3 \right\}.
\]
This set is convex and symmetric with respect to the origin.
If $\sigma_2(\alpha)+\sigma_3(\alpha)=0$ holds for $\alpha \in K$,
then $\alpha=0$ since the trace of $\alpha$ is zero.
Therefore, there exists $0<\epsilon<1/2$ such that
\begin{align}\label{a}
D\left(1/4,3+\epsilon,\epsilon\right) \cap \phi(O_K)=\left\{\phi(0)\right\}.
\end{align}
We choose $\lambda>0$ sufficiently large so that there exists $\beta
\in O_K$ such that $\beta \neq 0$ and $\phi (\beta) \in D(1/4,\lambda,\epsilon)$ by Minkowski's theorem.
Due to the symmetry of $D(1/4,\lambda,\epsilon)$, we can assume that $\beta > 0$.
Note that $\beta$ is not a rational number (i.e., $\beta \notin
{\mathbb Z}$) since $0<\epsilon<1/2$.
We denote the minimal polynomial of $\beta$ by $X^3+bX^2+cX+d$.
We put $\beta_1 := \beta$, $\beta_2 :=
\sigma_2(\beta)$, and $\beta_3 := \sigma_3(\beta)$.
Then,
\begin{align*}
&\lvert \beta_1 + \beta_2 + \beta_3 \rvert 
\leq \lvert \beta_1\rvert + \lvert \beta_2 + \beta_3\rvert
< \dfrac{1}{4}+ \dfrac{1}{2}<1.
\end{align*}
Since $\beta_1 + \beta_2 + \beta_3 \in {\mathbb Z}$, we see that
$\beta_1 + \beta_2 + \beta_3 =0$, which implies $b=0$.
From \eqref{a} we have $|\beta_2|, |\beta_3| > 3$,
which, together with $\lvert \beta_2+\beta_3\rvert < 1/2$, gives
$\beta_2\beta_3 < -9$.
Then, we have
\begin{align*}
c=\beta_1\beta_2+\beta_2\beta_3+\beta_3\beta_1=\beta_1(\beta_2+\beta_3)+\beta_2\beta_3<\frac{1}{8}-9<-8,
\end{align*}
and
\begin{align*}
1+c+d=1 + \beta_1(\beta_2+\beta_3)+\beta_2\beta_3(1-\beta_1)
<1 + \frac{1}{8}-9 \cdot \frac{3}{4}<0.
\end{align*}
Since $\beta_1 > 0$ and $\beta_2\beta_3 < -9$, we see that $d >0$.
Therefore, we have $\beta \in S_{0}^{3,tr}$.
\end{proof}

Similarly to $S_{-1}^{3,ntr}$ and $S_{-2}^{3,ntr}$ discussed in the
previous section, we note that $S_{-1}^{3,tr}$ and $S_{-2}^{3,tr}$ are
not full generative with respect to totally real cubic fields, in
contrast with $S_{0}^{3,tr}$ and $S_{-3}^{3,tr}$.
For example, consider a totally real cubic field $K={\mathbb
  Q}(\alpha)$, where $\alpha$ is one of the roots of $X^3-3X+1$.
Since any algebraic integer in $K$ has a multiple of three as its
trace, it does not belong to $S_{-1}^{3,tr}$ and $S_{-2}^{3,tr}$.

\section*{Acknowledgements}

We thank Shigeki Akiyama and Zeev Rudnick for their helpful comments.
This research was supported by JSPS KAKENHI Grant Numbers JP15K00342,
JP16KK0005, and JP22K12197.


\begin{thebibliography}{100}

\bibitem{SaitoChaos2016}A. Saito and A. Yamaguchi,
``Pseudorandom number generation using chaotic true orbits of the Bernoulli map,''
Chaos {\bf 26}, 063122 (2016).

\bibitem{SaitoChaos2018}A. Saito and A. Yamaguchi,
``Pseudorandom number generator based on the Bernoulli map on cubic algebraic integers,''
Chaos {\bf 28}, 103122 (2018).

\bibitem{DeloneFaddeev}B. N. Delone and D. K. Faddeev,
{\it The Theory of Irrationalities of the Third Degree}
(American Mathematical Society, Providence, R.I., 1964).

\bibitem{HambletonSAWilliamHC}
S. A. Hambleton and H. C. Williams,
{\it Cubic Fields with Geometry}
(Springer, 2018).

\bibitem{Silverman}J.H. Silverman,
{\it The Arithmetic of Dynamical Systems}
(Springer, New York, 2007).

\bibitem{Shanks}D. Shanks,
``The simplest cubic fields,''
Math. Comp. {\bf 28}, 1137–-1152 (1974).

\bibitem{Morton}P. Morton,
``Characterizing cyclic cubic extensions by automorphism polynomials,''
J. Number Theory {\bf 49}, 183--208 (1994).
  
\bibitem{Hoshi}A. Hoshi,
``On correspondence between solutions of a family of cubic Thue equations and isomorphism classes of the simplest cubic fields,''
J. Number Theory {\bf 131}, 2135--2150 (2011).

\bibitem{Hoshi2}A. Hoshi,
``On the simplest sextic fields and related Thue equations,''
Funct. Approx. Comment. Math. {\bf 47}, 35--49 (2012).

\bibitem{Hoshi3}A. Hoshi,
``Complete solutions to a family of Thue equations of degree 12,''
J. Théor. Nombres Bordeaux {\bf 29}, 549–-568 (2017).

\bibitem{Niederreiter}H. Niederreiter,
{\it Random Number Generation and Quasi-Monte Carlo Methods} (SIAM, 1992).

\bibitem{Hardy}G. H. Hardy and E. M. Wright,
{\it An Introduction to the Theory of Numbers}, 5th ed.
(Oxford University Press, 1979).
\end{thebibliography}
\end{document}